\theoremstyle{plain}
\newtheorem*{acknowledgements}{Acknowledgements}
\newcommand{\R}{  \mathbb{R}   }
\newcommand{\eps}{\varepsilon}
\newcommand{\e}{  \text{e}   }
\newcommand{\Z}{  \mathbb{Z}   }
\newcommand{\N}{  \mathbb{N}   }
\renewcommand{\H}{  \mathcal{H}   }
\newcommand{\T}{  \mathbb{T}   }
\newcommand{\dd}{  \text{d}   }
\newcommand{\ov}{  \overline  }
\renewcommand{\a}{  \alpha   }
\renewcommand{\b}{  \beta   }
\newcommand{\s}{  \sigma   }
\newcommand{\wh}{  \widehat   }
\newcommand{\<}{  \langle   }
\renewcommand{\>}{  \rangle   }
\numberwithin{equation}{section}
\author{ Laurent Thomann}
\address{Universit\'e de Nantes, Laboratoire de Math\'ematiques J. Leray, UMR CNRS 6629\\
2, rue de la Houssini\`ere \\
F-44322 Nantes Cedex 03, France. }
\email{laurent.thomann@univ-nantes.fr}
\urladdr{http://www.math.sciences.univ-nantes.fr/$\sim$thomann/}
\title[Quadratic  NLS on $\T$]{Low regularity for  a quadratic Schr\"odinger equation on $\T$} 
\begin{document}
\frontmatter
 \begin{abstract}
In this paper we consider a Schr\"odinger equation on the circle with  a quadratic nonlinearity. Thanks to an explicit computation of the first Picard iterate, we give a precision on the dynamic of the solution, whose existence was proved by C. E. Kenig, G. Ponce and L. Vega \cite{KPV}. We also show that  the equation is well-posed in a space $\H^{s,p}(\T)$ which contains the Sobolev space $H^{s}(\T)$ when $p\geq 2$.  
\end{abstract}

 \begin{altabstract}
Dans cet article on s'int\'eresse \`a une \'equation de Schr\"odinger sur le cercle avec une non-lin\'earit\'e quadratique. Un calcul explicite de la premi\`ere it\'er\'ee de Picard permet de donner une pr\'ecision sur la dynamique de la solution, dont l'existence a \'et\'e d\'emontr\'ee par C. E. Kenig, G. Ponce et L. Vega \cite{KPV}. On montre \'egalement que l'\'equation est bien pos\'ee dans un espace $\H^{s,p}(\T)$ qui contient l'espace de Sobolev $H^{s}(\T)$ lorsque $p\geq 2$.
\end{altabstract}

\subjclass{35A07; 35B35 ; 35B45; 35Q55}
\keywords{Non linear Schr\"odinger equation, rough initial conditions}
\altkeywords{\'equation de  Schr\"odinger non lin\'eaire, donn\'ees irr\'eguli\`eres}
\thanks{The author was supported in part by the  grant ANR-07-BLAN-0250.}

\maketitle
\mainmatter


\section{Introduction}

Denote by $\T=\R/2\pi\Z$  the unidimensional torus. In this paper we consider the following nonlinear Schr\"odinger equation
\begin{equation}\label{NLS}
\left\{
\begin{aligned}
&i\partial_t u+\Delta u   = \kappa \overline{u}^{2},\quad \kappa=\pm 1,\;\;
(t,x)\in\R\times \mathbb{T},\\
&u(0,x)= f(x)\in X,
\end{aligned}
\right.
\end{equation}
where $X$ is a Banach space (the space of the initial conditions). \\[3pt]
This equation has been intensively studied in the case $x\in M$ where $M$ is a Riemannian manifold and for different nonlinearities, usually of the form 
\begin{equation*}
F(u,\ov{u})=\pm u^{p_{1}}\,\ov{u}^{p_{2}}, \quad \text{where} \quad p_{1},p_{2}\in \N. 
\end{equation*}
Here we mainly discuss the results in one dimension for quadratic nonlinearities. For the other cases see \cite{CW}, \cite{KPV}, \cite{BGT2}, and references therein.\\[5pt]
\subsection{Previous results on the real line}~\\[10pt]
 In the case $x\in \R$, J. Ginibre and G. Velo \cite{GV}, Y. Tsutsumi \cite{Tsutsumi}, T. Cazenave and F.\:B. Weissler \cite{CW}  showed that the Cauchy problem is well posed for $f\in L^{2}(\R)$, for every nonlinearity of the type \eqref{NLin} with $p_{1}+q_{1}\leq 5$. The proof relies on the use of Strichartz inequalities, which are of the form 
\begin{equation}\label{NLin}
\|\e^{it\Delta}f\|_{L^{p}(\R,L^{q}(\R))}\leq C\|f\|_{L^{2}(\R)},\quad \text{with}\quad \frac1p+\frac2q=\frac12.
\end{equation}
In \cite{KPV}, C. E. Kenig, G. Ponce, and L. Vega show that \eqref{NLS} is well posed in $X=H^{s}(\R)$ :
\begin{itemize}
\item for $s>-3/4$ in the case $F(u,\ov{u})=\pm u^{2}$ or $F(u,\ov{u})=\pm \ov{u}^{2}$ ;
\item for $s>-1/4$ in the case $F(u,\ov{u})=\pm |u|^{2}$.
\end{itemize}
To obtain these results, the authors prove some bilinear estimates in the conormal spaces $X^{s,b}$ (see Definition \ref{defi}), and they also show that these estimates are optimal, and as a consequence it is impossible to perform a usual fixed point argument in these spaces, below the threshold $s=-3/4$ (resp. $s=-1/4$). Notice that the $X^{s,b}$ spaces distinguish the structure of the nonlinearity, which was not the case for the Strichartz spaces.\\[3pt]
In \cite{BejenaruTao}, I. Bejenaru and T. Tao extend the well posedness results to $s\leq -1$ in the case $F(u,\ov{u})= u^{2}$, and show that the equation \eqref{NLS} is ill-posed in $H^{s}(\R)$ when $s<-1$.\\[3pt]
\subsection{Previous results on the torus}~\\[5pt]
In the case $x\in \T$, J. Bourgain \cite{Bourgain1:res} established the embedding $X^{0,3/8}\subset L^4_{x,t}$, which permitted to show that the problem \eqref{NLS} is locally well posed in $L^{2}(\T)$, for every nonlinearity \eqref{NLin} with $p_{1}+p_{2}\leq 3$.\\[3pt]
Then, C. E. Kenig, G. Ponce, and L. Vega   \cite{KPV}, thanks to bilinear estimates in $X^{s,b}$ (see Theorem \ref{theoKPV} below),  obtained the well posedness of  \eqref{NLS} in $H^{s}(\T)$ for $s>-1/2$ in the case $F(u,\ov{u})=\pm u^{2}$ or $F(u,\ov{u})=\pm \ov{u}^{2}$. Again, these estimates fail if $s<-1/2$. \\[5pt]
%
%
\subsection{The $\H^{s,p}(\T)$ and $X^{s,b}$ spaces}~\\[5pt]
Now we introduce the $\H^{s,p}(\T)$ spaces
 \begin{defi}\label{defi2}($\H^{s,p}$ spaces)\\
For $s\in \R$ and $p\geq 1$, denote by $\H^{s,p}=\H^{s,p}(\T)$ the completion of $\mathcal{C}^{{\infty}}(\T)$ with respect to the norm
\begin{equation*}
\|f\|_{\H^{s,p}}=\Big(\,\sum_{n\in \Z}\<n\>^{ps}\,|{\breve{f}}(n)|^{p}\,\Big )^{\frac1p}.
\end{equation*}
Here $\breve{f}(n)$ denotes the Fourier coefficient of $f$ (see \eqref{FourierCoef}).
\end{defi}
\noindent These spaces where introduced by L. H\"ormander (see \cite{Hormander}, Section 10.1).\\[5pt]
\noindent There are several motivations to introduce these spaces\\[4pt]
\noindent $\bullet$ First notice that $\H^{s,2}(\T)=H^{s}(\T)$, and for $p> 2$ we have the (strict) inclusion $H^{s}(\T)\subset \H^{s,p}(\T)$.\\[2pt]
 $\bullet$ Then, the space $ \H^{s,p}$ scales like $H^{s(p)}$ where $s(p)=-\frac12+s+\frac1p$. Hence, if $s(p)<-\frac12$, the space $\H^{s,p}$ contains elements $f$ such that $|\breve{f}(n)|\longrightarrow +\infty$ when $n\longrightarrow +\infty$. Therefore we can go closer to the scaling of the equation \eqref{NLS} which is $-\frac32$.\\[2pt]
 $\bullet$ T. Cazenave, L. Vega and M. C. Vilela \cite{CaVeVi} where the first authors to study nonlinear Schr\"odinger equations in $\H^{s,p}$-like spaces. In fact they show that a class of NLS equations on $\R^{N}$ is well-posed if the linear flow belongs to some weak $L^{p}$ space. Moreover they prove that this condition can be ensured if the initial data $f$ satisfies $\widehat{f}\in L^{p,\infty}(\R^{N})$ for some $p\geq 1$. This latter space is a continuous version of the space $\H^{s,p}$. \\[2pt]
 $\bullet$ In  \cite{Grun2} A. Gr\"unrock  establishes bilinear and trilinear estimates in conormal spaces $X^{s,b}_{p,q}$ (see definition below) based on $L^{r}$. This permits him to show that the cubic Schr\"odinger equation
 \begin{equation*}
\left.
\begin{aligned}
&i\partial_t u+\Delta u   =\pm |u|^{2} u,
\,\, (t,x)\in\R\times \mathbb{R},
\end{aligned}
\right.
\end{equation*}
is well-posed for initial conditions in the corresponding continuous version of the space $\H^{s,p}$. He obtains  analogous results for the DNLS equation \cite{Grun2} and for the mKdV equation \cite{Grun1}.\\[5pt] 
In \cite{Christ}, M. Christ shows that the modified cubic problem
\begin{equation*}
\left\{
\begin{aligned}
&i\partial_t u+\Delta u   =\pm\big(|u|^{2}-2\mu(|u|^{2})   \big)u,
\,\, \text{where}\,\, 
\mu(|v|^{2})=\frac1{2\pi}\int_{-\pi}^{\pi}|v(x)|^{2}\text{d}x,\\
&u(0,x)= f(x)\in \H^{s,p}(\T),
\end{aligned}
\right.
\end{equation*}
is well posed in $\H^{s,p}(\T)$ for any $s\geq 0$ and $p\geq 1$. See \cite{Christ} for precise statements.\\[5pt]
Recently, A. Gr\"unrock and S. Herr \cite{GrunHerr} have shown  the well-posedness in $\H^{s,p}$ spaces of the DNLS equation on the torus, thanks to multilinear estimates.\\[5pt]
See \cite{Christ, Grun1,Grun2, GrunHerr} for other features of the spaces $\H^{s,p}$ and more references.\\[10pt]
 $\bullet$ Notice that the $\H^{s,p}$ is preserved by the linear Schr\"odinger flow. Write
 \begin{equation*}
 f(x)=\sum_{n\in \Z}\a_{n}\e^{inx},\;\;\text{then}\;\;\;  \e^{it\Delta}f(x)=\sum_{n\in \Z}\a_{n}\e^{-in^{2}t}\e^{inx},
 \end{equation*}
 and for all $t\in \R$, $\|e^{it\Delta}f\|_{\H^{s,p}}=\|f\|_{\H^{s,p}}$.\\[10pt]
\noindent We now define the $X^{s,b}$ spaces
\begin{defi}\label{defi}($X^{s,b}$ spaces)\\
(i) For $s,b\in \R$, denote by $X^{s,b}=X^{s,b}(\R\times \T)$ the completion of $\mathcal{C}^{{\infty}}\big(\T,\mathcal{S}(\R)\big)$ with respect to the norm
\begin{equation*}
\|F\|_{X^{s,b}}=\big(\sum_{n \in \Z} \int_{\R}\<\tau+n^{2}\>^{2b}\<n\>^{2s}|\widetilde{F}(\tau,n)|^{2}\text{d}\tau\big )^{\frac12}.
\end{equation*}
(ii) Let $T>0$, we define the restriction spaces $X_{T}^{s,b}=X^{s,b}([-T,T]\times \T)$ by 
\begin{equation}
\|F\|_{X_{T}^{s,b}}=\inf\Big\{\|\psi(\frac{t}{T})\, F\|_{X^{s,b}}, \;F\in X^{s,b}\;\text{with}\;\psi\in \mathcal{S}(\R)\;\text{s.t.}\;\psi|_{[-1,1]}=1\Big\}.
\end{equation}
Here $\widetilde{F}$ stands for the space-time Fourier transform (see \eqref{SpTi}).
\end{defi}
\noindent In the following, we will mainly use the space $X_{1}^{s,b}=X^{s,b}([-1,1]\times \T)$.\\[5pt]
\noindent We recall the key estimates which permit to perform a fixed point argument in the $X^{s,b}$ spaces, and to deduce that the equation \eqref{NLS} is well posed in $H^{s}$ for $s>-\frac12$.

\begin{prop}\label{EstInt} Let $s\leq 0$ and $\frac12<b\leq 1$. 
 Then  for all   $F\in {X^{s,b-1}_{1}}$, we have
\begin{equation*}
\|\int_{0}^{t}\e^{i(t-t')\Delta}F(t',\cdot)\text{d}t'\|_{X^{s,b}_{1}}  \leq C \| F\|_{X^{s,b-1}_{1}}\;.
\end{equation*}
\end{prop}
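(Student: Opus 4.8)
The plan is to reduce the proposition to a one–dimensional estimate in the time variable, exploiting that conjugation by the linear group trivialises the $X^{s,b}$ norms. For $W\in X^{s,\b}$ set $W^{\sharp}(t,n)=\e^{itn^{2}}\,\breve{W}(t,n)$, where $\breve{W}(t,\cdot)$ denotes the spatial Fourier coefficient; since $\mathcal{F}_{t}\big[\e^{itn^{2}}h(t)\big](\tau)=\widehat{h}(\tau-n^{2})$, the substitution $\s=\tau+n^{2}$ in Definition \ref{defi} gives
\[
\|W\|_{X^{s,\b}}=\Big(\sum_{n\in\Z}\<n\>^{2s}\,\|W^{\sharp}(\cdot,n)\|_{H^{\b}(\R)}^{2}\Big)^{1/2}.
\]
Writing $v(t,x)=\int_{0}^{t}\e^{i(t-t')\Delta}F(t',\cdot)\,\dd t'$ and computing its spatial Fourier coefficient, one gets $\breve{v}(t,n)=\e^{-itn^{2}}\int_{0}^{t}F^{\sharp}(t',n)\,\dd t'$, so for any cutoff $\psi$ the profile $(\psi v)^{\sharp}(t,n)$ is simply $\psi(t)\int_{0}^{t}F^{\sharp}(t',n)\,\dd t'$. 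As $X^{s,b}_{1}$ is a restriction space, I would fix once and for all $\psi\in\mathcal{C}^{\infty}(\R)$ with $\psi\equiv1$ on $[-1,1]$ and $\mathrm{supp}\,\psi\subset[-2,2]$, pick $\Phi\in X^{s,b-1}$ agreeing with $F$ on $[-1,1]\times\T$ and with $\|\Phi\|_{X^{s,b-1}}\leq2\|F\|_{X^{s,b-1}_{1}}$, and note that the Duhamel integral of $\Phi$ extends $v$ (on $[-1,1]$ it only involves the values of $F$). Applying the scalar estimate below to each $g(\cdot)=\Phi^{\sharp}(\cdot,n)$ and summing over $n$ against the harmless weights $\<n\>^{2s}$, the proposition reduces to: for $\tfrac12<b\leq1$ there is $C_{\psi}>0$ with
\[
\Big\|\psi(t)\int_{0}^{t}g(t')\,\dd t'\Big\|_{H^{b}(\R)}\leq C_{\psi}\,\|g\|_{H^{b-1}(\R)}\qquad\text{for all }g\in\mathcal{S}(\R).
\]

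To prove this scalar estimate I would decompose $g=g_{0}+g_{1}$ in Fourier, according to $|\tau|\leq1$ and $|\tau|\geq1$. For the high–modulation piece, the identity $\int_{0}^{t}g_{1}(t')\,\dd t'=\tfrac{1}{2\pi}\int\tfrac{\e^{it\tau}-1}{i\tau}\widehat{g_{1}}(\tau)\,\dd\tau$ splits $\int_{0}^{t}g_{1}$ into the function whose time–Fourier transform is $\tfrac{1}{i\tau}\widehat{g_{1}}(\tau)$, of $H^{b}$ norm $\leq\big\|\<\tau\>^{b}|\tau|^{-1}\widehat{g_{1}}\big\|_{L^{2}(|\tau|\geq1)}\leq\sqrt{2}\,\|g\|_{H^{b-1}}$, plus the constant $c=-\tfrac{1}{2\pi}\int_{|\tau|\geq1}\tfrac{1}{i\tau}\widehat{g_{1}}(\tau)\,\dd\tau$, for which Cauchy--Schwarz gives $|c|\leq\big(\int_{|\tau|\geq1}|\tau|^{-2}\<\tau\>^{2(1-b)}\,\dd\tau\big)^{1/2}\|g\|_{H^{b-1}}$ — and this integral converges \emph{precisely} because $b>\tfrac12$. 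Multiplication by the fixed function $\psi$ being bounded on $H^{b}$ for $0\leq b\leq1$, both pieces are controlled after multiplying by $\psi$. For the low–modulation piece one has $\<\tau\>\sim1$ on the Fourier support, hence $\|g_{0}\|_{L^{2}}\lesssim\|g_{0}\|_{H^{b-1}}\leq\|g\|_{H^{b-1}}$, and by Paley--Wiener $g_{0}$ extends to an entire function; its Taylor expansion at the origin gives $\int_{0}^{t}g_{0}(t')\,\dd t'=\sum_{k\geq0}\tfrac{t^{k+1}}{(k+1)!}g_{0}^{(k)}(0)$ with $|g_{0}^{(k)}(0)|\lesssim\|g_{0}\|_{L^{2}}$ uniformly in $k$ (since $|\tau|^{k}\leq1$ on the support). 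As $\psi$ is compactly supported, $\|t^{k+1}\psi\|_{H^{b}}\leq\|t^{k+1}\psi\|_{H^{1}}\lesssim(k+1)2^{k+1}$, so $\big\|\psi\int_{0}^{\cdot}g_{0}\big\|_{H^{b}}\lesssim\|g_{0}\|_{L^{2}}\sum_{k\geq0}\tfrac{(k+1)2^{k+1}}{(k+1)!}\lesssim\|g\|_{H^{b-1}}$. Adding the two contributions establishes the scalar estimate, hence the proposition.

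The hard part is the constant term $c$ in the high–modulation piece — and it is the unique place where the hypothesis $b>\tfrac12$ enters. The naive pointwise bound $\big|\int_{0}^{t}g_{1}\big|\leq|t|\,\|g_{1}\|_{L^{1}_{t}}$ is useless here, because $g$ lies only in $H^{b-1}$ with $b-1$ possibly negative; one is forced to extract the genuinely bounded Fourier multiplier $1/(i\tau)$ and then bound the residual constant by hand, at which point the $L^{2}$–integrability of $|\tau|^{-1}\<\tau\>^{1-b}$ at infinity requires $2b>1$. A more bookkeeping-type obstacle is handling the restriction space $X^{s,b}_{1}$ cleanly (choosing a good global extension $\Phi$ of $F$ and a compactly supported $\psi$): this is what makes the Duhamel operator well defined on $X^{s,b-1}_{1}$ in the first place and, not coincidentally, what makes the low–modulation Taylor series summable.
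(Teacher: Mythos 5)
Your proof is correct, and it is essentially the argument the paper delegates to the reference: the paper gives no proof of Proposition~\ref{EstInt}, simply citing \cite{Ginibre} and noting that ``the proof reduces to time integrations,'' which is exactly your reduction to the scalar estimate $\|\psi(t)\int_0^t g\|_{H^b}\leq C\|g\|_{H^{b-1}}$ via conjugation by the free flow, followed by the standard low/high modulation splitting. The place you single out as using $b>\frac12$ (the convergence of $\int_{|\tau|\geq 1}|\tau|^{-2}\langle\tau\rangle^{2(1-b)}\,\mathrm{d}\tau$ controlling the residual constant) is indeed the crux, and your handling of the restriction norm is consistent with Definition~\ref{defi}.
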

\noindent See \cite{Ginibre} for a proof. Notice this estimate holds in the general case of a riemannian manifold, indeed the proof reduces to time integrations. Notice also that we always have the estimate 
\begin{equation*}
\|\e^{it\Delta}f\|_{X^{s,b}_{1}}  \leq C \| f\|_{H^{s}},
\end{equation*}
but we won't use it in this paper. \\[8pt]
The following theorem is one of the main results of \cite{KPV} (see Theorem 1.9. in \cite{KPV})

\begin{theo}\label{theoKPV}( Kenig-Ponce-Vega \cite{KPV})
Let $-\frac12<s\leq 0$, then there exists $b_{0}>\frac12$ such that for all $\frac12<b\leq b_{0}$ and all $v,w\in X^{s,b}(\R\times \T)$
\begin{equation}\label{FondaKPV}
\|\ov{v}\,\ov{w}\|_{X^{s,b-1}}\lesssim \|v\|_{X^{s,b}}\|w\|_{X^{s,b}}.
\end{equation}
Moreover, for any $s<-\frac12$ and $b\in \R$, an estimate of the form \eqref{FondaKPV} fails.
\end{theo}
\noindent We can deduce the following

\begin{coro}
Let $-\frac12<s\leq 0$, then there exists $b_{0}>\frac12$ such that for all $\frac12<b\leq b_{0}$ and all $v,w\in X^{s,b}([-1,1]\times \T)$
\begin{equation}\label{FondaKPV2}
\|\ov{v}\,\ov{w}\|_{X_{1}^{s,b-1}}\lesssim \|v\|_{X^{s,b}_{1}}\|w\|_{X^{s,b}_{1}}.
\end{equation}
\end{coro}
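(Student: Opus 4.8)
The plan is to deduce the restricted estimate \eqref{FondaKPV2} from the global estimate \eqref{FondaKPV} of Theorem \ref{theoKPV} by a standard extension-and-localization argument. Fix $s,b$ as in the corollary, so that \eqref{FondaKPV} holds. Let $v,w\in X^{s,b}_1([-1,1]\times\T)$. By definition of the restriction norm, for any $\eps>0$ there exist extensions $\widetilde v,\widetilde w\in X^{s,b}(\R\times\T)$ with $\widetilde v=v$, $\widetilde w=w$ on $[-1,1]\times\T$ and
\begin{equation*}
\|\widetilde v\|_{X^{s,b}}\leq \|v\|_{X^{s,b}_1}+\eps,\qquad \|\widetilde w\|_{X^{s,b}}\leq \|w\|_{X^{s,b}_1}+\eps.
\end{equation*}

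Next I would apply \eqref{FondaKPV} to $\widetilde v,\widetilde w$ to get $\|\ov{\widetilde v}\,\ov{\widetilde w}\|_{X^{s,b-1}}\lesssim \|\widetilde v\|_{X^{s,b}}\|\widetilde w\|_{X^{s,b}}$. Since $\ov{\widetilde v}\,\ov{\widetilde w}$ is a global function in $X^{s,b-1}(\R\times\T)$ that agrees with $\ov v\,\ov w$ on $[-1,1]\times\T$, it is an admissible competitor in the infimum defining $\|\ov v\,\ov w\|_{X^{s,b-1}_1}$ (here one uses $b-1<\tfrac12$, so that the restriction norm is still defined by the same infimum over extensions; alternatively one inserts a cutoff $\psi(t)$ with $\psi|_{[-1,1]}=1$ and notes $\|\psi(t)\,\ov{\widetilde v}\,\ov{\widetilde w}\|_{X^{s,b-1}}\lesssim \|\ov{\widetilde v}\,\ov{\widetilde w}\|_{X^{s,b-1}}$ since multiplication by a fixed Schwartz function is bounded on $X^{s,b'}$ for $|b'|\leq 1$). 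Therefore
\begin{equation*}
\|\ov v\,\ov w\|_{X^{s,b-1}_1}\leq \|\ov{\widetilde v}\,\ov{\widetilde w}\|_{X^{s,b-1}}\lesssim \|\widetilde v\|_{X^{s,b}}\|\widetilde w\|_{X^{s,b}}\leq \big(\|v\|_{X^{s,b}_1}+\eps\big)\big(\|w\|_{X^{s,b}_1}+\eps\big).
\end{equation*}
Letting $\eps\to 0$ yields \eqref{FondaKPV2}.

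There is no serious obstacle here; this is a soft functoriality argument. The only point requiring a little care is the claim that multiplication by a smooth compactly supported (or Schwartz) time cutoff $\psi$ is bounded on $X^{s,b'}(\R\times\T)$ uniformly, which for $|b'|\leq 1$ follows from the elementary inequality $\<\tau+n^2\>^{b'}\lesssim \<\tau-\sigma\>^{|b'|}\<\sigma+n^2\>^{b'}$ together with the fact that $\widetilde\psi$ decays rapidly; one should also make sure the value of $b-1$ (which is negative, between $-\tfrac12$ and $0$) is covered, but this is exactly the range where the cutoff lemma is classical. If one prefers to avoid even this, the cleanest route is to observe that $\ov{\widetilde v}\,\ov{\widetilde w}$ is itself a global extension of $\ov v\,\ov w$, so it is directly admissible in the infimum defining the restriction norm of $\ov v\,\ov w$, and no cutoff is needed at all — this is the version I would actually write down.
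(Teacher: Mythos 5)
Your proposal is correct and is essentially the paper's own argument: the paper multiplies $v$ and $w$ by cutoffs $\psi_1,\psi_2$ equal to $1$ on $[-1,1]$ (chosen to realise the infima defining the restricted norms), applies the global estimate \eqref{FondaKPV} to $\psi_1 v$ and $\psi_2 w$, and observes that $\psi_1\ov v\,\psi_2\ov w$ is an admissible competitor for $\|\ov v\,\ov w\|_{X^{s,b-1}_1}$ since $\psi_1\psi_2=1$ on $[-1,1]$ --- which is exactly your ``cleanest route'' of using the product of near-optimal extensions directly, with no cutoff lemma needed. The only cosmetic difference is that the paper's extensions are automatically of the cutoff form required by its Definition \ref{defi}(ii), so even your cautionary remark about multiplier bounds on $X^{s,b-1}$ is not needed.
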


\begin{proof}
Let $\psi_{1},\psi_{2} \in \mathcal{C}^{\infty}_{0}(\R)$ be so that $\psi_{1},\psi_{2}=1$ on $[-1,1]$ and $\text{supp}\;\psi_{1},\psi_{2} \subset [-2,2]$. Then by  \eqref{FondaKPV} applied to $\psi_{1}(t)v$ and  $\psi_{2}(t)w$, we obtain
\begin{equation*}
\|\ov{v}\,\ov{w}\|_{X_{1}^{s,b-1}}\leq \|\psi_{1}({t })\ov{v}\,\psi_{2}({t })\ov{w}\|_{X^{s,b-1}}\lesssim \|\psi_{1}v\|_{X^{s,b}}\|\psi_{2}w\|_{X^{s,b}},
\end{equation*}
and the result follows, by choosing $\psi_{1}$ and $\psi_{2}$ which realise the infimum for the $X^{s,b}([-1,1]\times \T)$ norm.
\end{proof}

\section{Main results of this paper}~\\
\subsection{Local well posedness in the Sobolev scale}~\\[10pt]
Our first result is a precision on the dynamic of the solution of \eqref{NLS} when the initial condition $f$ is in $ H^{s_{0}}(\T)$ with $-\frac12<s_{0}\leq 0$.

\noindent Let  $f\in\mathcal{D}'(\T)$. Then define
\begin{equation*}
u_{0}(t,x)=\e^{it\Delta}f(x)=\sum_{n\in \Z}\breve{f}(n)\e^{-in^{2}t}\e^{inx},
\end{equation*}
the free Schr\"odinger evolution and
\begin{equation*}
u_{1}(t,x)=-i\int_{0}^{t}\e^{i(t-t')\Delta}(\overline{u_{0}}^{2})(t',x)\text{d}t',
\end{equation*}
 the first Picard iterate of the equation \eqref{NLS}. Then we will show that there exists $b>\frac12$ so that 
\begin{equation}\label{EstPic0}
\|u_{1}\|_{ X^{0,b}([-1,1]\times\T)}\lesssim  \|f\|^{2}_{H^{s_{0}}(\T)}.
\end{equation}
Hence, $u_{1}$ is more regular than $f$ : there  is a  gain of $|s_{0}|$ derivative. We will take profit of this phenomenon to prove that it is also the case for $u-\e^{it\Delta}f$, where $u$ is the solution  of \eqref{NLS}. 

\begin{theo}\label{theo1}
Let $\kappa=\pm 1$. Let $-\frac12<s_{0}\leq 0$ and $f\in H^{s_{0}}(\T)$. Then  there exist $b>\frac12$ and $T>0$
 such that there exists a unique solution $u$ to \eqref{NLS} in the space
\begin{equation}\label{espace0}
Y_{T}^{0,b}=\Big(\e^{it\Delta}f+X^{0,b}\big([-T,T]\times \T\big)\Big).
\end{equation}
Moreover, given $0<T'<T$ there exist $R=R(T')>0$ such that the map $\tilde{f} \mapsto \tilde{u}(t)$ from $\{\;\tilde{f}\in H^{s_{0}}(\T)\;:\; \|\tilde{f}-f\|_{H^{s_{0}}}<R\;\}$ into the class \eqref{espace0} with $T'$ instead of $T$ is Lipschitz.
\end{theo}

\noindent This result will be obtained with a contraction argument in the space $X^{0,b}$ (thanks to the gain of regularity), and therefore we will only need the estimate \eqref{FondaKPV} with $s=0$.\\[5pt]
%
\subsection{Local well posedness in the $\H^{s,p}$ scale}~\\[10pt]
We can use the gain of regularity of the first Picard iterate to solve the Cauchy problem \eqref{NLS} for data $f\in\H^{s,p}(\T)$, and this will improve slightly the result of \cite{KPV}, as we have the inclusion $H^{s_{0}}(\T)\subset \H^{s_{0},p}(\T)$ for $p>0$.\\[5pt]
The following condition on the real numbers $s_{0}$ and $p$ will be needed for our result
\begin{equation}\label{cond}
\frac3p+s_{0}>\frac56.
\end{equation}

\begin{theo}\label{theo2}
Let $\kappa=\pm 1$. Let $s_{0}>-\frac12$ and let  $p>2$ be so that the condition \eqref{cond} is satisfied. Let $f\in \H^{s_{0},p}(\T)$. Then for all $s_{1}<-1+\frac2p$ there exist $b>\frac12$, $s_{1}<s<-1+\frac2p$, and $T>0$ such that there exists a unique solution $u$ to \eqref{NLS} in the space
\begin{equation}\label{espace}
Y_{T}^{s,b}=\Big(\e^{it\Delta}f+X^{s,b}\big([-T,T]\times \T\big)\Big).
\end{equation}
Moreover, given $0<T'<T$ there exist $R=R(T')>0$ such that the map $\tilde{f} \mapsto \tilde{u}(t)$ from $\{\;\tilde{f}\in \H^{s_{0},p}(\T)\;:\; \|\tilde{f}-f\|_{\H^{s_{0},p}}<R\;\}$ into the class \eqref{espace} with $T'$ instead of $T$ is Lipschitz.
\end{theo}
\noindent To prove Theorem \ref{theo2} we will use the estimate \eqref{FondaKPV} in its full strength.\\[5pt]
\noindent From the previous result, we can immediately deduce

\begin{coro}\label{corotheo}
Let $\a<\frac1{18}$ and let $f\in \mathcal{D}'(\T)$ be such that $|\breve{f}(n)|\lesssim \<n\>^{\a}$.  Then there exist $s>-\frac19$,  $b>\frac12$ and $T>0$ such that  there  exists a unique solution to \eqref{NLS} in the space
\begin{equation*}
Y_{T}^{s,b}=\Big(\e^{it\Delta}f+X^{s,b}\big([-T,T]\times \T\big)\Big).
\end{equation*}
\end{coro}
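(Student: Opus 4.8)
The plan is to read Corollary~\ref{corotheo} directly off Theorem~\ref{theo2}. The only real work is to exhibit, for the given distribution $f$, exponents $s_{0}$ and $p$ satisfying the hypotheses of that theorem and such that $f\in\H^{s_{0},p}(\T)$; for the membership one turns the pointwise bound into a summability statement via
\begin{equation*}
\|f\|_{\H^{s_{0},p}}^{p}=\sum_{n\in\Z}\<n\>^{ps_{0}}\,|\breve f(n)|^{p}\lesssim\sum_{n\in\Z}\<n\>^{p(s_{0}+\a)},
\end{equation*}
the series on the right converging as soon as $p(s_{0}+\a)<-1$, i.e. $s_{0}<-\a-\tfrac1p$. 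So the task reduces to producing a pair $(s_{0},p)$ with $p>2$, $s_{0}>-\tfrac12$, the condition \eqref{cond}, and $s_{0}<-\a-\tfrac1p$, and then to check that the range of admissible $s$ coming from Theorem~\ref{theo2} can be made to lie above $-\tfrac19$.

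This is where the hypothesis $\a<\tfrac1{18}$ is used. I would pick $p$ in the interval $\bigl(\max(2,\tfrac{2}{1-2\a}),\ \tfrac94\bigr)$, which is nonempty precisely because $\a<\tfrac1{18}$ --- indeed $2<\tfrac94$ always, and $\tfrac{2}{1-2\a}<\tfrac94$ is equivalent to $\a<\tfrac1{18}$. For such a $p$ one has $p>2$; the inequality $p<\tfrac94$ rewrites as $\tfrac1p>\tfrac49$, and $p>\tfrac{2}{1-2\a}$ rewrites as $\tfrac1p<\tfrac12-\a$. Then I would pick $s_{0}\in\bigl(-\tfrac12,\,-\a-\tfrac1p\bigr)$, an interval nonempty exactly because $\tfrac1p<\tfrac12-\a$; by the computation above $f\in\H^{s_{0},p}(\T)$, and condition \eqref{cond} is automatic since $\tfrac1p>\tfrac49$ forces $\tfrac3p>\tfrac43$ and hence $\tfrac3p+s_{0}>\tfrac43-\tfrac12=\tfrac56$.

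Finally I would apply Theorem~\ref{theo2} to this pair $(s_{0},p)$. Since $\tfrac1p>\tfrac49$ we have $-1+\tfrac2p>-\tfrac19$, so $s_{1}:=-\tfrac19$ satisfies $s_{1}<-1+\tfrac2p$ and is therefore admissible in the theorem; the theorem then supplies $b>\tfrac12$, an exponent $s$ with $-\tfrac19<s<-1+\tfrac2p$, a time $T>0$, and the unique solution to \eqref{NLS} in $Y_{T}^{s,b}$, which is the assertion. I do not expect a genuine analytic obstacle here: the argument is purely numerical, and the only thing requiring care is to keep the three constraints on $p$ simultaneously satisfiable --- $p>2$ from Theorem~\ref{theo2}, $p<\tfrac94$ forced by the target $s>-\tfrac19$, and $p>\tfrac{2}{1-2\a}$ forced by $s_{0}>-\tfrac12$ together with $f\in\H^{s_{0},p}$ --- which, as one checks, is possible exactly at the stated threshold $\a<\tfrac1{18}$.
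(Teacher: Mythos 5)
Your argument is correct and is exactly the route the paper intends: the paper states the corollary as an immediate consequence of Theorem~\ref{theo2} and records the same numerology (taking $s_{0}$ near $-\tfrac12$ and $\tfrac1p$ slightly above $\tfrac49$, so that $-1+\tfrac2p>-\tfrac19$ while \eqref{cond} and the membership $f\in\H^{s_{0},p}$ hold precisely when $\a<\tfrac1{18}$) in the remark following the statement. Your write-up simply makes the choice of the pair $(s_{0},p)$ and the verification of the three constraints explicit, which is consistent with the paper.
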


\noindent For instance :  Let $0<\eps<1$ be small and  $\a=\frac1{18}-\eps$. Define  $f\in\mathcal{D}'(\T)$ by $\breve{f}(n)= \<n\>^{\a}$. Then  $f\in H^{s}(\T)$ for $s<-\frac12-\frac1{18}+\eps<-\frac12$, but $f\in \H^{s_{0},p}(\T)$ for some $(s_{0},p)$ which satisfies the assumptions of Theorem \ref{theo2}

\begin{rema}
The result of Theorem \ref{theo2} is interesting when $s_{0}$ is close to $-\frac12$, and $p$ as big as possible, under the assumption \eqref{cond}.  \\
 Let $0<\eps<1$ be small and set $s_{0}=-\frac12+\eps$. Then $p>2$ satisfies \eqref{cond}  iff 
\begin{equation*}
\frac49-\frac13\eps<\frac1p<\frac12.
\end{equation*}
Hence, the parameter $s$ in Theorem \ref{theo2} can be chosen close to $-\frac19$. In other words there is a gain of $\sim \frac12-\frac19=\frac7{18}$ derivative.
\end{rema}

\begin{rema}
The conclusions of Theorem \ref{theo1} and \ref{theo2} are likely to hold with the nonlinearities $F(u)=\pm u^{2}$, but we do not pursue this here.
\end{rema}
~\\[5pt]
\subsection{Notations and plan of the paper}~\\[10pt]
\noindent For $F\in \mathcal{S}(\R)$ we define the time-Fourier transform by
\begin{equation*}
\widehat{F}(\tau)=\int_{\R}\e^{-i\tau t}F(t)\text{d}t,
\end{equation*}
which has the following properties
\begin{equation}\label{TF}
 \wh{\ov{F}}(\tau)=\ov{\wh{F}}(-\tau)\quad \text{and} \quad\wh{F\e^{i\theta \cdot}}(\tau)=\wh{F}(\tau-\theta)\;\;\text{for all}\;\;\theta\in \R.
\end{equation}
Each  $F\in \mathcal{C}^{{\infty}}\big(\T,\mathcal{S}(\R)\big)$ admits the Fourier  expansion 
\begin{equation}\label{FourierCoef}
F(t,x)=\sum_{n\in \Z}\breve{F}(t,n)\e^{inx}, \;\; \text{where}\;\; \breve{F}(\tau,n)=\frac1{2\pi}\int_{-\pi}^{\pi}\e^{-in x}F(t,x)\text{d}x,
\end{equation}
is the periodic Fourier coefficient of $F$.\\
Finally, we denote by
\begin{equation}\label{SpTi}
\widetilde{F}(\tau,n)=\frac1{2\pi}\int_{\R}\int_{-\pi}^{\pi}\e^{-i(\tau t+n x)}F(t,x)\text{d}t\text{d}x,
\end{equation} 
 the space-time Fourier transform.\\[5pt]

\begin{enonce*}{Notations}
In this paper $c$, $C$ denote constants the value of which may change
from line to line. These constants will always be universal, or depending only on fixed quantities. We use the notations $a\sim b$,  
$a\lesssim b$ if $\frac1C b\leq a\leq Cb$, $a\leq Cb$
respectively.
\end{enonce*}

\noindent In Section \ref{FirstPicard} we make explicit computations to estimate the first Picard iteration in $X^{s,b}$ spaces.\\
Then, in Section \ref{Bilin} we establish a bilinear estimate in   $X^{s,b}$ spaces.\\
In Section \ref{contrac}, we follow an idea of N. Burq and N. Tzvetkov \cite{BT2, BT3} and look for a solution of \eqref{NLS} of the form $u=\e^{it\Delta}f+v$. The existence and uniqueness of $v$ is then proved with  a fixed point argument, using the estimates of the previous sections.
\begin{acknowledgements}
The author would like to thank N. Burq and N. Tzvetkov for useful discussions on the subject.
\end{acknowledgements}


\section{The first Picard iteration}\label{FirstPicard}~\\[10pt]
\begin{lemm}\label{integrale}
Let $\varphi\in \mathcal{S}(\R)$. Then
\begin{equation*}
\int_{\R}\frac{1}{\<\tau+A\>}|\varphi|(\tau)\dd\tau \lesssim \frac1{\<A\>},
\end{equation*}
uniformly in $A\in \R$.
\end{lemm}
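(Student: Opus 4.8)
The plan is to reduce the estimate to an elementary ``Peetre--type'' inequality together with the rapid decay of $\varphi$. The key point is the pointwise bound
\begin{equation*}
\frac{1}{\langle \tau+A\rangle}\lesssim \frac{\langle \tau\rangle}{\langle A\rangle},\qquad \tau,A\in\R,
\end{equation*}
which follows from $1+|A|\le 1+|\tau+A|+|\tau|\le (1+|\tau+A|)(1+|\tau|)$ together with $\langle x\rangle\sim 1+|x|$. Granting this, one simply writes
\begin{equation*}
\int_{\R}\frac{|\varphi|(\tau)}{\langle \tau+A\rangle}\,\dd\tau\lesssim \frac{1}{\langle A\rangle}\int_{\R}\langle\tau\rangle\,|\varphi|(\tau)\,\dd\tau,
\end{equation*}
and since $\varphi\in\mathcal{S}(\R)$ the remaining integral is a finite constant depending only on $\varphi$ (for instance it is controlled by the Schwartz seminorm $\sup_{\tau}\langle\tau\rangle^{3}|\varphi(\tau)|$). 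This yields the claim uniformly in $A\in\R$.

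If one prefers to avoid invoking the Peetre inequality, the same conclusion follows by splitting the domain of integration into $\{|\tau|\le |A|/2\}$ and $\{|\tau|\ge |A|/2\}$. On the first region $|\tau+A|\ge |A|-|\tau|\ge |A|/2$, hence $\langle\tau+A\rangle\gtrsim\langle A\rangle$ and this part of the integral is $\lesssim \langle A\rangle^{-1}\|\varphi\|_{L^{1}(\R)}$. On the second region $\langle\tau\rangle\gtrsim\langle A\rangle$, so bounding crudely $\langle\tau+A\rangle^{-1}\le 1$ and inserting the factor $1\lesssim\langle\tau\rangle/\langle A\rangle$ gives a contribution $\lesssim \langle A\rangle^{-1}\int_{\R}\langle\tau\rangle\,|\varphi|(\tau)\,\dd\tau$. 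Adding the two contributions finishes the argument.

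There is no genuine obstacle here: the statement is an auxiliary inequality whose only inputs are the triangle inequality and the finiteness of a weighted $L^{1}$ norm of a Schwartz function. Note in particular that the implied constant is allowed to depend on $\varphi$ (it is absorbed into $\lesssim$), so no uniformity with respect to $\varphi$ is required, only uniformity in $A$, which both arguments provide.
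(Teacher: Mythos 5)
Your proposal is correct and follows essentially the same route as the paper: the paper's proof also rests on the Peetre-type inequality $\<\tau\>\<\tau+A\>\gtrsim\<A\>$ combined with the rapid decay $|\varphi|(\tau)\lesssim\<\tau\>^{-3}$ to make the remaining integral converge. The splitting argument you offer as an alternative is a fine elementary substitute, but it is not needed.
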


\begin{proof}
As ${\varphi}$ is in the Schwartz class $|{\varphi}|(\tau)\lesssim \<\tau\>^{-3}$. \\
Then notice that $\<\tau\>\<\tau+A\>\gtrsim \<A\>$,
therefore 
\begin{equation*}
\int_{\R}\frac{\<A\>}{\<\tau+A\>}|{\varphi}|(\tau)\dd\tau\lesssim\int_{\R}\frac{\<A\>}{\<\tau\>\<\tau+A\>} \frac1{\<\tau\>^{2}}\dd\tau\lesssim1,
\end{equation*}
hence the result.
\end{proof}

\noindent Let  $f\in\mathcal{D}'(\T)$, denote by $\a_{n}=\breve{f}(n)$. Then define
\begin{equation}\label{u_{0}}
u_{0}(t,x)=\e^{it\Delta}f(x)=\sum_{n\in \Z}\a_{n}\e^{-in^{2}t}\e^{inx},
\end{equation}
the free Schr\"odinger evolution and
\begin{equation}\label{u_{1}}
u_{1}(t,x)=-i\int_{0}^{t}\e^{i(t-t')\Delta}(\overline{u_{0}}^{2})(t',x)\text{d}t',
\end{equation}
which is the first Picard iterate of the equation \eqref{NLS}.

\begin{prop}\label{propPicard}
Let $-\frac12<s_{0}\leq 0$ and $p\geq 2$.  Then there exists $b_{1}> \frac12$ such that for all $\frac12<b<b_{1}$, all $f\in \H^{s_{0},p}(\T)$ and all $s<-1+2/p$ we have 
\begin{equation}\label{EstPic}
\|u_{1}\|_{ X^{s,b}([-1,1]\times\T)}\lesssim  \|f\|^{2}_{\H^{s_{0},p}(\T)}.
\end{equation}
Moreover, in the case $p=2$, the estimate \eqref{EstPic} holds for $s=0$.
\end{prop}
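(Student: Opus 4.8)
The plan is to compute the space-time Fourier transform of $u_1$ explicitly and then estimate its $X^{s,b}$ norm directly. First I would write $\overline{u_0}^2$ in Fourier variables: since $\overline{u_0}(t,x)=\sum_n \overline{\a_n}\,\e^{in^2 t}\e^{-inx}$, squaring gives $\overline{u_0}^2(t,x)=\sum_{k}\big(\sum_{n_1+n_2=-k}\overline{\a_{n_1}}\,\overline{\a_{n_2}}\,\e^{i(n_1^2+n_2^2)t}\big)\e^{ikx}$. Applying the Duhamel operator $-i\int_0^t \e^{i(t-t')\Delta}(\cdot)\,dt'$ introduces, for each mode $(k)$ with $n_1+n_2=-k$, the factor $\int_0^t \e^{-ik^2(t-t')}\e^{i(n_1^2+n_2^2)t'}\,dt' = \e^{-ik^2 t}\,\dfrac{\e^{i(n_1^2+n_2^2+k^2)t}-1}{i(n_1^2+n_2^2+k^2)}$ (with the usual convention when the denominator vanishes, in which case one gets a factor $t$). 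The key algebraic observation is that the resonance quantity $n_1^2+n_2^2+k^2 = n_1^2+n_2^2+(n_1+n_2)^2 = 2(n_1^2+n_1 n_2+n_2^2)$ never vanishes unless $n_1=n_2=0$, and in general $|n_1^2+n_1 n_2+n_2^2|\gtrsim \langle n_1\rangle^2 + \langle n_2\rangle^2 \gtrsim \langle k\rangle^2$. This large denominator is exactly the source of the smoothing.

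Next I would take the time-Fourier transform. Cutting $u_1$ to $[-1,1]$ by a bump $\psi$ and using that $\psi(t)\e^{i\theta t}$ has Fourier transform $\wh\psi(\tau-\theta)$, each term contributes something of the form $\wh\psi(\tau+k^2) - \wh\psi(\tau+k^2-(n_1^2+n_2^2+k^2))$ divided by $(n_1^2+n_2^2+k^2)$, i.e. a Schwartz function of $\tau+k^2$ plus a Schwartz function of $\tau$ shifted, all multiplied by the small factor $1/(n_1^2+n_1 n_2 + n_2^2)$. So $\widetilde{\psi u_1}(\tau,k)$ is bounded by
\begin{equation*}
\sum_{n_1+n_2=-k}\frac{|\a_{n_1}|\,|\a_{n_2}|}{\langle n_1\rangle^2+\langle n_2\rangle^2}\Big(|\wh\psi|(\tau+k^2)+|\wh\psi|(\tau+n_1^2+n_2^2)\Big).
\end{equation*}
Then the $X^{s,b}$ norm requires estimating $\sum_k \int_\R \langle \tau+k^2\rangle^{2b}\langle k\rangle^{2s}\,|\widetilde{\psi u_1}(\tau,k)|^2\,d\tau$. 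For the first piece the $\tau$-integral is immediate (Schwartz decay in $\tau+k^2$ beats $\langle\tau+k^2\rangle^{2b}$ for $b<b_1$ with $b_1$ depending on the decay rate of $\wh\psi$); for the second piece one uses Lemma \ref{integrale}-type reasoning, noting $\langle\tau+k^2\rangle \lesssim \langle\tau+n_1^2+n_2^2\rangle\langle n_1^2+n_2^2-k^2\rangle$ and that $|n_1^2+n_2^2-k^2| = 2|n_1 n_2|\lesssim \langle n_1\rangle^2+\langle n_2\rangle^2$, which is again controlled by the denominator. Either way one is left with
\begin{equation*}
\|u_1\|_{X^{s,b}_1}^2 \lesssim \sum_{k\in\Z}\langle k\rangle^{2s}\Big(\sum_{n_1+n_2=-k}\frac{|\a_{n_1}|\,|\a_{n_2}|}{\langle n_1\rangle^2+\langle n_2\rangle^2}\Big)^2.
\end{equation*}

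Finally I would reduce this discrete sum to the Hörmander norm $\|f\|_{\H^{s_0,p}}$. Writing $\langle n_1\rangle^2+\langle n_2\rangle^2 \gtrsim \langle n_1\rangle^{2-s_0}\langle n_2\rangle^{2-s_0}$ for $s_0\le 0$ (so as to factor a $\langle n_j\rangle^{s_0}$ onto each $\a_{n_j}$) and using that $\langle k\rangle \lesssim \langle n_1\rangle\langle n_2\rangle$, one needs $\sum_k \langle k\rangle^{2s}\big(\sum_{n_1+n_2=-k} \langle n_1\rangle^{s_0-2+\delta}\langle n_2\rangle^{s_0-2+\delta}(\langle n_1\rangle^{s_0}|\a_{n_1}|)(\langle n_2\rangle^{s_0}|\a_{n_2}|)\big)^2$ to be finite. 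Setting $\b_n = \langle n\rangle^{s_0}|\a_n|\in \ell^p$, this is a weighted convolution estimate; by Hölder (with $1/p+1/p'=1$) and Young's inequality the convolution $\ell^p * \ell^p$ lands in $\ell^r$ with $1/r = 2/p - 1$, and the extra polynomial weights $\langle n_j\rangle^{s_0-2+\delta}$ are summable against $\ell^{p'}$ precisely when $2(2-s_0-\delta) > 2/p'$, i.e. when $2/p + s_0 > $ something — this is where the hypothesis surfaces (in the final argument of the paper the relevant threshold for the Picard term is $s< -1+2/p$, matching the exponent loss here). I expect the main obstacle to be precisely this bookkeeping: distributing the gain of the denominator $1/(\langle n_1\rangle^2+\langle n_2\rangle^2)$ between the two factors and the output weight $\langle k\rangle^s$ so that everything closes under a single summability condition, and checking that for $p=2$ (where $\b_n\in\ell^2$, $\a_n=\langle n\rangle^{-s_0}\b_n$) one may even take $s=0$ because then $\langle k\rangle^s=1$ and the convolution $\ell^2*\ell^2\subset\ell^\infty$ together with the $s_0<0$ room in the denominator suffices. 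The other delicate point is fixing $b_1>\tfrac12$: it is determined by how much $\langle\tau\rangle$-growth the Schwartz decay of $\wh\psi$ can absorb, so any $b<b_1$ with, say, $2b_1 < 3$ (from $|\wh\psi|(\tau)\lesssim\langle\tau\rangle^{-3}$ as in Lemma \ref{integrale}) works.
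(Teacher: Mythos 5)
Your overall strategy --- computing the first Picard iterate explicitly in Fourier variables and extracting smoothing from the non-vanishing resonance function $n_1^{2}+n_2^{2}+(n_1+n_2)^{2}\gtrsim \<n_1\>^{2}+\<n_2\>^{2}$ --- is the same as the paper's, but two quantitative steps fail. First, the reduction to $\sum_{k}\<k\>^{2s}\big(\sum_{n_1+n_2=-k}|\a_{n_1}||\a_{n_2}|(\<n_1\>^{2}+\<n_2\>^{2})^{-1}\big)^{2}$ overstates the gain. The Duhamel boundary term oscillating like $\e^{-ik^{2}t}$ does keep the full denominator, but the other piece oscillates like $\e^{i(n_1^{2}+n_2^{2})t}$, so its contribution to $\int\<\tau+k^{2}\>^{2b}|\cdot|^{2}\dd\tau$ carries a factor $\<n_1^{2}+n_2^{2}+k^{2}\>^{2b}$ with $2b>1$; after dividing by the squared denominator, the net gain per mode is only $(\<n_1\>\<n_2\>)^{-2(1-b)}$, i.e.\ barely better than $\<n_j\>^{-1/2}$ on each factor $|\a_{n_j}|$. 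This is exactly the exponent $\b=2(1-b)$ that the paper obtains by measuring $\ov{u_0}^{2}$ in $X^{s,b-1}$ and applying Lemma \ref{integrale}, and it is what forces $s_0>-\frac12$. A single-frequency datum $\a_{N}=1$ already refutes your display: there $\|u_1\|^{2}_{X^{0,b}_1}\sim N^{-4(1-b)}$ while your right-hand side is $\sim N^{-4}$.

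Second, and more seriously, you bound $|\widetilde{\psi u_1}(\tau,k)|$ by the $\ell^{1}$ sum over $n_1+n_2=-k$ and then square, discarding the $\wh{\psi}$ factors. With the corrected gain $(\<n_1\>\<n_2\>)^{-(1-b)}$, controlling the square of that $\ell^{1}$ sum by Young's inequality requires $\<n\>^{-(1-b)}|\a_n|\in\ell^{1}$, hence $(1-b)+s_0>\frac12$ for $f\in H^{s_0}$ --- impossible when $b>\frac12$ and $s_0\le 0$ --- so even the endpoint case $p=2$, $s=0$ does not close, and the general case fails for $s_0$ near $-\frac12$. The paper avoids this half-derivative loss through Lemma \ref{cp}: for fixed output frequency the phases $n^{2}+(n+k)^{2}$ are well separated in $n$, so the bumps $\wh{\psi}(\tau-n_1^{2}-n_2^{2})$ are almost orthogonal and the \emph{square of the sum} is bounded by the \emph{sum of squares} $\sum_{n}|\a_n|^{2}|\a_{-n-k}|^{2}|\wh{\psi}|(\tau-\cdots)$. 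The resulting bound \eqref{i} only needs $\<n\>^{-\b}|\a_n|^{2}\in\ell^{1}$, i.e.\ $f\in H^{-\b/2}$ with $\b/2=1-b=-s_0$. This orthogonality lemma is the key ingredient missing from your argument; your concluding H\"older/Young bookkeeping is broadly the right shape (compare \eqref{iii}--\eqref{iv}), but it can only be run after that reduction.
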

\begin{rema}
The result of Proposition \ref{propPicard} shows that the first Picard iterate is more regular than the initial condition, when 
$s_{0}$ is close to $-\frac12$ and $p<4$. In this case, we can take $s>s_{0}$.\\
The result we stated is not optimal when $s_{0}$ is far from $-\frac12$.
\end{rema}

\begin{proof}
Let $b>\frac12$ to be chosen later. Denote by $\b=2(1-b)<1$ and $\s=-s\geq 0$. \\
Let $\psi_{0}\in \mathcal{C}^{{\infty}}_{0}(\R)$ s.t. $\psi_{0}=1$ on $[-1,1]$, and $\psi\in \mathcal{C}^{{\infty}}_{0}(\R)$ s.t. $\psi_{0}\psi=\psi_{0}$. Then by Definition \ref{defi} and Proposition \ref{EstInt} we have 
\begin{eqnarray}
\|u_{1}\|_{ X^{s,b}([-1,1]\times\T)}&\leq &\|\psi_{0}(t)\,u_{1}\|_{ X^{s,b}(\R\times\T)}\nonumber\\
    &\lesssim &\|\psi(t)\,\overline{u_{0}}^{2}\|_{ X^{s,b-1}(\R\times\T)}.\label{somme}
\end{eqnarray}
Now by the expression \eqref{u_{0}}, we have (with the change of variables $p=-n-m$)
\begin{eqnarray*}
\psi(t)(\overline{u_{0}}^{2})&=&\psi(t)\sum_{(n,m)\in \Z^{2}}\overline{\alpha_{n}}\,\ov{\alpha_{m}}\,\,\e^{i(n^{2}+m^{2})t}\e^{-i(n+m)x}\\
&=&\psi(t)\,\sum_{p\in \Z}\Big(\sum_{n\in \Z}\overline{\alpha_{n}}\,\ov{\alpha_{-n-p}}\,\,\e^{i(n^{2}+(n+p)^{2})t}\Big)\e^{ipx}.
\end{eqnarray*}
Hence we we deduce the Fourier coefficients of $\psi(t)(\overline{u_{0}}^{2})$ :
\begin{equation}\label{c_{p}}
c_{p}(t):=\sum_{n\in \Z}\overline{\alpha_{n}}\,\ov{\alpha_{-n-p}}\,\,\e^{i(n^{2}+(n+p)^{2})t}
=\psi(t)\breve{(\overline{u_{0}}^{2})}(p).
\end{equation}
From the properties \eqref{TF} of the time-Fourier transform, we deduce
\begin{equation}\label{defcp}
\widehat{c_{p}}(\tau)=\sum_{n\in \Z}\ov{\alpha_{n}}\,\ov{\alpha_{-n-p}}\;\widehat{\psi}(\tau-n^{2}-(n+p)^{2}),
\end{equation}
and by Definition \ref{defi}, we have
\begin{equation*}
I:=\|\psi(t)(\overline{u_{0}}^{2})\|^{2}_{{X^{s,b-1}(\R\times \T)}}=\sum_{p\in \Z}\int_{\R}\<\tau+p^{2}\>^{-\beta}\<p\>^{2s}|\widehat{c_{p}}(\tau)|^{2}\dd\tau,
\end{equation*}
with $\beta =2(1-b)$. Now, by  Lemma \ref{cp} (see below for the statement and proof) we have  
\begin{equation*}
|\wh{c_{p}}(\tau)|^{2}\lesssim \sum_{n\in \Z} |\a_{n}|^{2}|\a_{-n-p}|^{2}|\widehat{\psi}|(\tau -n^{2}-(n+p)^{2}),
\end{equation*}
uniformly in $(\tau,p)\in \R\times \Z$. With the change of variables $m=-n-p$ and $\tau'=\tau-n^{2}-m^{2}$, we deduce
\begin{eqnarray}
I&\lesssim&\sum_{n\in\Z}\sum_{p\in\Z}\int_{\R}\frac{\<p\>^{2s}}{\<\tau+p^{2}\>^{\beta}}|\alpha_{n}|^{2}|\alpha_{-n-p}|^{2}|\widehat{\psi}|(\tau-n^{2}-(n+p)^{2})\dd\tau\nonumber\\
&=&\sum_{n\in\Z}\sum_{m\in\Z}\int_{\R}\frac{\<n+m\>^{2s}}{\<\tau+(n+m)^{2}\>^{\beta}}|\alpha_{n}|^{2}|\alpha_{m}|^{2}|\widehat{\psi}|(\tau-n^{2}-m^{2})\dd\tau\nonumber\\
&=&\sum_{(n,m)\in\Z^{2}}\int_{\R}\frac{\<n+m\>^{2s}}{\<\tau+(n+m)^{2}+n^{2}+m^{2}\>^{\beta}}|\alpha_{n}|^{2}|\alpha_{m}|^{2}|\widehat{\psi}|(\tau)\dd\tau.\label{I1}
\end{eqnarray}
Apply Lemma \ref{integrale} with $A=(n+m)^{2}+n^{2}+m^{2}$. Denote by $\s=-s\geq0$. Then from \eqref{I1} we deduce

\begin{equation}\label{i}
I\lesssim\sum_{(n,m)\in\Z^{2}}\frac{|\alpha_{n}|^{2}|\alpha_{m}|^{2}}{\<n+m\>^{2\s}\<n^{2}+m^{2}\>^{\beta}}.
\end{equation}
$\bullet$ From here we assume that $\s>0$.\\[3pt]
For $m\in\Z$, denote by 
\begin{equation*}
\gamma_{m}=\sum_{n\in\Z}\frac{|\alpha_{n}|^{2}}{\<n+m\>^{2\s}\<n\>^{\b}},
\end{equation*}
thanks to the inequality $\<n^{2}+m^{2}\>\geq \<n\>\<m\>$, from \eqref{i} we deduce
\begin{equation}\label{ii}
I\lesssim\sum_{m\in\Z}\Bigg(\frac{|\alpha_{m}|^{2}}{\<m\>^{\b}}\Big(\sum_{n\in\Z}\frac{|\alpha_{n}|^{2}}{\<n+m\>^{2\s}\<n\>^{\b}}\Big)\Bigg)= \sum_{m\in\Z}\gamma_{m}\, \frac{|\alpha_{m}|^{2}}{\<m\>^{\b}}.
\end{equation}
Now by H\"older, for $p\geq 2$
\begin{equation}\label{iii}
 \sum_{m\in\Z}\gamma_{m}\, \frac{|\alpha_{m}|^{2}}{\<m\>^{\b}}\lesssim \Big(\sum_{k\in\Z}\frac{|\alpha_{k}|^{p}}{\<k\>^{\b p/2}}\;\Big)^{\frac2p}\Big(\sum_{m\in\Z}\gamma_{m}^{q_{1}}\Big)^{\frac1{q_{1}}}=\|f\|^{2}_{\H^{-\b/2,p}}\Big(\sum_{m\in\Z}\gamma_{m}^{q_{1}}\Big)^{\frac1{q_{1}}},
\end{equation}
with
\begin{equation}\label{j}
\frac1{q_{1}}=1-\frac2{p}.
\end{equation}
To estimate the last term in \eqref{iii}, we observe that 
\begin{equation*}
\gamma_{m}=\Big(\,\frac{|\a_{k}|^{2}}{\<k\>^{\b}}*\frac{1}{\<j\>^{2\s}}\Big)(m),
\end{equation*}
then by Young's inequality, for all $p_{1},r_{1}\geq 1$ so that 
\begin{equation}\label{jj}
\frac1{q_{1}}=\frac1{p_{1}}+\frac1{r_{1}}-1,
\end{equation}
 and so that  for $2\s r_{1}>1$, we have
\begin{equation}\label{iv}
\Big(\sum_{m\in\Z}\gamma_{m}^{q_{1}}\Big)^{\frac1{q_{1}}}\lesssim \Big(\sum_{k\in\Z}\frac{|\alpha_{k}|^{2p_{1}}}{\<k\>^{\b p_{1}}}\;\Big)^{\frac1{p_{1}}}\Big(\sum_{j\in\Z}\frac{1}{\<j\>^{2\s r_{1}}}\Big)^{\frac1{r_{1}}}.
\end{equation}
We take $p_{1}=p/2$. This choice together with the conditions \eqref{j}, \eqref{jj} and  $2\s r_{1}>1$ yields
\begin{equation*}
\s>\frac1{2r_{1}}=1-\frac2p,
\end{equation*}
and thus by \eqref{ii},  \eqref{iii} and  \eqref{iv} we obtain
\begin{equation*}
I\lesssim \|f\|^{4}_{\H^{-\b/2,p}}.
\end{equation*}
Now we choose $b>\frac12$ such that $\b=-2s_{0}$, i.e. $b=2(1-\b)=1+s_{0}$, and thus $\frac12<b\leq 1$, as we assumed that $-\frac12<s_{0}\leq 0$.\\
Together with \eqref{somme}, this concludes the proof of the first statement of Proposition \ref{propPicard}.\\[5pt]
$\bullet$ Now we deal with the case  $p=2$ and $\s=0$.\\[3pt]
  By \eqref{i} we only have to bound the term
\begin{equation*}
J:=\sum_{(n,m)\in\Z^{2}}\frac{|\alpha_{n}|^{2}|\alpha_{m}|^{2}}{\<n^{2}+m^{2}\>^{\beta}}.
\end{equation*}
Thanks to the inequality $\<n^{2}+m^{2}\>\geq \<n\>\<m\>$, we get
\begin{equation*}
J\leq \sum_{(n,m)\in\Z^{2}}\frac{|\alpha_{n}|^{2}|\alpha_{m}|^{2}}{\<n\>^{\beta}\<m\>^{\beta}}=\|f\|^{4}_{H^{s_{0}}},
\end{equation*}
which was the claim.
\end{proof}

\begin{lemm}\label{cp}
Let $\wh{c_{p}}(\tau)$ be defined by \eqref{defcp}. Then there exists $C>0$, which only depends on $\psi$, so that
\begin{equation}\label{lemcp}
|\wh{c_{p}}(\tau)|^{2}\leq C \sum_{n\in \Z} |\a_{n}|^{2}|\a_{-n-p}|^{2}|\widehat{\psi}|(\tau -n^{2}-(n+p)^{2}),
\end{equation}
for   all $(\tau,p)\in \R\times \Z$.
\end{lemm}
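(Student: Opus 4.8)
The plan is to bound $|\wh{c_p}(\tau)|^2$ by interpreting the sum defining $\wh{c_p}(\tau)$ as an inner product in a weighted $\ell^2$ space and applying Cauchy--Schwarz, with the weight chosen to be $|\wh\psi|$ evaluated at the relevant shifts. Recall from \eqref{defcp} that
\begin{equation*}
\wh{c_p}(\tau)=\sum_{n\in\Z}\ov{\a_n}\,\ov{\a_{-n-p}}\;\wh\psi\big(\tau-n^2-(n+p)^2\big).
\end{equation*}
First I would write, for each fixed $(\tau,p)$,
\begin{equation*}
\wh{c_p}(\tau)=\sum_{n\in\Z}\Big(\ov{\a_n}\,\ov{\a_{-n-p}}\,|\wh\psi|^{1/2}\big(\tau-n^2-(n+p)^2\big)\Big)\cdot\Big(\frac{\wh\psi}{|\wh\psi|^{1/2}}\big(\tau-n^2-(n+p)^2\big)\Big),
\end{equation*}
where the quotient $\wh\psi/|\wh\psi|^{1/2}$ is understood to vanish where $\wh\psi$ does; its modulus equals $|\wh\psi|^{1/2}$. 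Applying Cauchy--Schwarz in $n$ then gives
\begin{equation*}
|\wh{c_p}(\tau)|^2\leq\Big(\sum_{n\in\Z}|\a_n|^2|\a_{-n-p}|^2\,|\wh\psi|\big(\tau-n^2-(n+p)^2\big)\Big)\cdot\Big(\sum_{n\in\Z}|\wh\psi|\big(\tau-n^2-(n+p)^2\big)\Big).
\end{equation*}
The first factor is exactly the right-hand side of \eqref{lemcp} without the constant, so it remains only to show the second factor is bounded by a constant $C$ depending only on $\psi$, uniformly in $(\tau,p)\in\R\times\Z$.

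The main point is therefore the uniform bound
\begin{equation*}
\sup_{(\tau,p)\in\R\times\Z}\sum_{n\in\Z}|\wh\psi|\big(\tau-n^2-(n+p)^2\big)\leq C.
\end{equation*}
Since $\psi\in\mathcal C_0^\infty(\R)$, its Fourier transform is Schwartz, so $|\wh\psi|(\mu)\lesssim\<\mu\>^{-2}$ (indeed $\<\mu\>^{-N}$ for any $N$). The phase is $\phi_p(n):=n^2+(n+p)^2=2n^2+2pn+p^2=2\big(n+\tfrac p2\big)^2+\tfrac{p^2}{2}$. The key observation is that as $n$ ranges over $\Z$, the values $\phi_p(n)$ are spaced increasingly far apart: for $n\geq 0$ one has $\phi_p(n+1)-\phi_p(n)=4n+2p+2$, and more robustly the points $\{\phi_p(n):n\in\Z\}$ form, after removing their minimum, a set whose number of elements in any interval of length $L$ is $O(\sqrt L)$, because $\phi_p$ is a quadratic with leading coefficient $2$. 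Consequently
\begin{equation*}
\sum_{n\in\Z}|\wh\psi|\big(\tau-\phi_p(n)\big)\lesssim\sum_{n\in\Z}\<\tau-\phi_p(n)\>^{-2},
\end{equation*}
and the latter sum is bounded uniformly in $\tau$ and $p$: each dyadic shell $\{k:\<\tau-\phi_p(n_k)\>\sim 2^j\}$ contributes at most $O(2^{j/2})$ terms each of size $2^{-2j}$, giving a geometric series in $j$. The cleanest way to package this is: split according to whether $|\tau-\phi_p(n)|\leq 1$ (at most two values of $n$, since $\phi_p$ is strictly convex and integer-separated on each branch) or $|\tau-\phi_p(n)|>1$, and on the latter range compare the sum to an integral $\int_\R\<\tau-2(\xi+p/2)^2\>^{-2}\,d\xi$, which equals a finite constant independent of $\tau$ and $p$ after the substitution $\eta=\xi+p/2$ and a scaling.

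I expect the only delicate bookkeeping to be the counting argument showing that $\sum_n\<\tau-\phi_p(n)\>^{-2}$ is uniformly bounded — that is, making rigorous the claim that the quadratic images $\phi_p(n)$ thin out fast enough. This is entirely standard (it is the same mechanism that makes $\sum_n\<a n^2+bn+c\>^{-s}$ converge uniformly for $s>1/2$), and a power $\<\cdot\>^{-2}$ from the Schwartz decay of $\wh\psi$ is comfortably more than enough; one could even get away with $\<\cdot\>^{-1-\eps}$. No interaction between the $\a_n$'s enters this second factor, so once the uniform bound is in place the lemma follows immediately by combining it with the Cauchy--Schwarz step above, with $C$ depending only on $\psi$ through finitely many Schwartz seminorms of $\wh\psi$.
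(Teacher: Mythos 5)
Your proof is correct, and it takes a genuinely different and noticeably shorter route than the paper. The paper expands $|\wh{c_{p}}(\tau)|^{2}$ as a double sum over pairs of frequencies, changes variables $m=n+k$, and uses the Schwartz decay of $\wh{\psi}$ together with the resonance identity $\big(\tau-n^{2}-(n+p)^{2}\big)-\big(\tau-(n+k)^{2}-(n+k+p)^{2}\big)=2k(2n+k+p)$ to show that the off-diagonal terms ($k\neq 0$, $k\neq -2n-p$) are dominated by the diagonal ones; this requires a further Cauchy--Schwarz in the inner sum and the elementary bounds $\<2k(2n+k+p)\>^{2}\gtrsim\<k\>\<2n+k+p\>$ and $\<n-l\>\<n+l+p\>\gtrsim\<2n+p\>$. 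You instead apply Cauchy--Schwarz once, with weight $|\wh{\psi}|$, which reduces the whole lemma to the single uniform bound $\sup_{(\tau,p)}\sum_{n}|\wh{\psi}|\big(\tau-n^{2}-(n+p)^{2}\big)<\infty$; since $|\wh{\psi}|(\mu)\lesssim\<\mu\>^{-2}$, this is exactly the quadratic lattice-point count behind estimate \eqref{EstKPV} of Lemma \ref{LemmaKPV} (equivalently \eqref{EstCor1} with $\gamma_{1}=2$, after writing $\tau-n^{2}-(n+p)^{2}=-2\big(z+n(n-y)\big)$ with $y=-p$, $z=(p^{2}-\tau)/2$ and using $\<2A\>\geq\<A\>$), so you could even replace your dyadic counting sketch by a citation of that lemma. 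Your approach buys brevity and avoids the off-diagonal bookkeeping entirely; what it gives up is only that it leans on the uniform convergence of $\sum_{n}\<\tau-\phi_{p}(n)\>^{-2}$, which the paper's argument does not need in this form (though the paper proves the same kind of bound anyway in Section 4). Two cosmetic points: take absolute values before splitting $\wh{\psi}=|\wh{\psi}|^{1/2}\cdot\big(\wh{\psi}/|\wh{\psi}|^{1/2}\big)$ so as not to worry about zeros of $\wh{\psi}$, and the count of $n$ with $|\tau-\phi_{p}(n)|\leq 1$ is bounded by an absolute constant (it can be $3$ near the vertex of the parabola, not $2$), which of course changes nothing.
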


\begin{proof}
Denote by 
\begin{equation*}
\wh{\psi_{1}}(\tau, n,p)=\widehat{\psi}(\tau -n^{2}-(n+p)^{2}),
\end{equation*}
then

\begin{equation*}
|\wh{c_{p}}(\tau)|^{2}=\sum_{(n,m)\in \Z^{2}} \ov{\a_{n}}\;\ov{\a_{-n-p}} \; \a_{m}\;\a_{-m-p}\;\wh{\psi_{1}}(\tau, n,p) \;\wh{\psi_{2}}(\tau,m,p),
\end{equation*}
and with the change of variables $m=n+k$ we obtain
\begin{equation}\label{square}
|\wh{c_{p}}(\tau)|^{2}=\sum_{n\in \Z} \sum_{k\in \Z} \ov{\a_{n}}\;\ov{\a_{-n-p}} \; \a_{n+k}\;\a_{-n-k-p}\;\wh{\psi_{1}}(\tau, n,p) \;\wh{\psi_{2}}(\tau,n+k,p).
\end{equation}
As $\wh{\psi}\in \mathcal{S}(\R)$, for all $N\in \N$, $|\wh{\psi}|\lesssim \<\tau\>^{-N}$. In the remaining of the proof, the constant $N$ may change from line to line. By the inequality $\<A+B\>\lesssim \<A\>\<B\>$, we have 

\begin{multline} 
 |\wh{\psi_{1}}(\tau, n,p)\wh{\psi_{2}}(\tau,n+k,p)|\lesssim   \\[4pt]
 \begin{aligned}
&\lesssim\frac{|\wh{\psi_{1}}(\tau, n,p)|^{\frac12} \;|\wh{\psi_{2}}(\tau,n+k,p)|^{\frac12}}{\big\<\tau-n^{2}-(n+p)^{2}\big\>^{N}  \<\tau-(n+k)^{2}-(n+k+p)^{2}\>^{N}  }     \\[4pt]
&\lesssim \frac{|\wh{\psi_{1}}(\tau, n,p)|^{\frac12} \;|\wh{\psi_{1}}(\tau,n+k,p)|^{\frac12}}{ \<2k\,(2n+k+p)\>^{N} }. \label{prod}   
\end{aligned}
\end{multline}
\noindent $\bullet$ If $k=0$ or $k=-2n-p$, in the sum \eqref{square}, we immediately get the bound \eqref{lemcp}.\\[3pt]
$\bullet$
Denote by
\begin{equation*}
I_{p}(\tau)=\sum_{n\in \Z} \sum_{k\in \Z_{p}^{*}} \ov{\a_{n}}\;\ov{\a_{-n-p}} \; \a_{n+k}\;\a_{-n-k-p}\;\wh{\psi_{1}}(\tau, n,p) \;\wh{\psi_{2}}(\tau,n+k,p).
\end{equation*}
where $\Z_{p}^{*}=\Z\backslash \{0,-2n-p\}$.\\
 If $k\neq0$ and  $k\neq-2n-p$, observe that
\begin{equation*}
\big \<2k\,(2n+k+p)\big\>^{2}\gtrsim \<k\> \<2n+k+p\>,
 \end{equation*}
thus by \eqref{prod}

\begin{equation*}
|\wh{\psi_{1}}(\tau, n,p) \;\wh{\psi_{1}}(\tau,n+k,p)| \lesssim  \frac{|\wh{\psi_{1}}(\tau, n,p)|^{\frac12} \;|\wh{\psi_{1}}(\tau,n+k,p)|^{\frac12}}{ \<k\>^{N} \<2n+k+p\>^{N}     },
\end{equation*}
and 
\begin{eqnarray*}
I_{p}(\tau)&\lesssim &\sum_{n\in \Z}|\a_{n}||\a_{-n-p}||\wh{\psi_{1}}(\tau, n,p)|^{\frac12}\Bigg(\sum_{k\in \Z}     \frac{  |\a_{n+k}||\a_{-n-k-p}| |\wh{\psi_{1}}(\tau,n+k,p)|^{\frac12}}{ \<k\>^{N} \<2n+k+p\>^{N}     }
  \Bigg)\\[2pt]
  &= &\sum_{n\in \Z}|\a_{n}||\a_{-n-p}||\wh{\psi_{1}}(\tau, n,p)|^{\frac12}\Big(\sum_{j\in \Z}     \frac{|\a_{j}||\a_{-j-p}||\wh{\psi_{1}}(\tau,j,p)|^{\frac12}}{ \<n-j\>^{N} \<n+j+p\>^{N}     } \Big),
\end{eqnarray*}
after the change of variables $j=k+n$ in the second sum. \\
Now by Cauchy-Schwarz
\begin{eqnarray*}
\sum_{j\in \Z}     \frac{|\a_{j}||\a_{-j-p}||\wh{\psi_{1}}(\tau,j,p)|^{\frac12}}{ \<n-j\>^{N} \<n+j+p\>^{N}     }
&\lesssim& d(\tau,p)^{\frac12}\Big(\sum_{l\in \Z} \frac{1}{\<n-l\>^{N}}  \frac{1}{\<n+l+p\>^{N}}   \Big)^{\frac12},
\end{eqnarray*}
where 
\begin{equation*}
d(\tau,p)= \sum_{j\in \Z}     |\a_{j}|^{2}|\a_{-j-p}|^{2}|\wh{\psi_{1}}(\tau,j,p)|,
\end{equation*}
and as $\<n-l\>\<n+l+p\>\gtrsim \<2n+p\>$,
\begin{eqnarray*}
\sum_{l\in \Z} \frac{1}{\<n-l\>^{N}}  \frac{1}{\<n+l+p\>^{N}} &\lesssim& 
 \frac{1}{\<2n+p\>^{N}} \sum_{l\in \Z} \frac{1}{\<n-l\>^{N}}  \frac{1}{\<n+l+p\>^{N}}\\
 &\lesssim&  \frac{1}{\<2n+p\>^{N}}, 
\end{eqnarray*}
by Cauchy-Schwarz. Thus 
\begin{eqnarray*}
I_{p}(\tau)&\lesssim &d(\tau,p)^{\frac12} \sum_{n\in \Z}|\a_{n}||\a_{-n-p}||\wh{\psi_{1}}(\tau, n,p)|^{\frac12}   \frac{1}{\<2n+p\>^{N}}    \\
&\lesssim&d(\tau,p)^{\frac12}\Big( \sum_{n\in \Z}|\a_{n}|^{2}|\a_{-n-p}|^{2}|\wh{\psi_{1}}(\tau, n,p)|\Big)^{\frac12}
\Big( \sum_{n\in \Z}  \frac{1}{\<2n+p\>^{N}}\Big)^{\frac12}\\
&\lesssim&d(\tau,p),
\end{eqnarray*}
which completes the proof.
\end{proof}

\section{The bilinear estimate}\label{Bilin}

This section is devoted to the proof of the following result

\begin{prop}\label{bilinear}
Let $-\frac12<s_{0}\leq 0$ and $p\geq 2$. Then for all
\begin{equation}\label{condprop}
-\frac16-s_{0}-\frac1p<s\leq 0,
\end{equation}
there exists $b_{2}>\frac12$ such that for all $\frac12<b<b_{2}$, all $f\in \H^{s_{0},p}(\T)$ and all $v\in X^{s,b}_{1}(\R\times \T)$ 
\begin{equation}\label{Estproduit}
\|\int_{0}^{t}\e^{i(t-t')\Delta}\ov{u_{0}}\,\ov{v}(t',\cdot)\text{d}t'\|_{X^{s,b}([-1,1]\times \T)}  \lesssim 
\|f\|_{ \H^{s_{0},p}}\|v\|_{X^{s,b}([-1,1]\times \T)}\,,
\end{equation}
where $u_{0}(t)=\e^{it\Delta}f$.
\end{prop}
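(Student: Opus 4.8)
The plan is to proceed exactly as in the proof of Proposition \ref{propPicard}: reduce the estimate \eqref{Estproduit} to a purely arithmetic sum over frequencies by first applying Proposition \ref{EstInt} to get
\[
\|\int_{0}^{t}\e^{i(t-t')\Delta}\ov{u_{0}}\,\ov{v}(t',\cdot)\text{d}t'\|_{X^{s,b}([-1,1]\times \T)}\lesssim \|\psi(t)\,\ov{u_{0}}\,\ov{v}\|_{X^{s,b-1}(\R\times\T)},
\]
where $\psi\in\mathcal{C}^{\infty}_0(\R)$ is a suitable cutoff. Writing $u_{0}=\sum_n\a_n\e^{-in^2t}\e^{inx}$ and expanding $v$ in its space-time Fourier series $\widetilde v(\tau,m)$, the Fourier coefficient of $\psi(t)\ov{u_0}\,\ov v$ at frequency $q$ is a convolution in $n$ of $\ov{\a_n}\,\widehat\psi(\cdot -n^2)$ against $\ov{\widetilde v}(-\cdot,-q-n)$ (up to reflections coming from \eqref{TF}). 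Then, exactly as in Lemma \ref{cp}, one squares this coefficient, performs the change of variables that diagonalises the phase, uses that $\widehat\psi$ is Schwartz to control the off-diagonal terms by a Cauchy–Schwarz argument, and is left with bounding
\[
\sum_{q\in\Z}\int_\R\frac{\<q\>^{2s}}{\<\tau+q^2\>^{\beta}}\Big(\sum_{n\in\Z}|\a_n|^2\,|\wh\psi|(\tau-n^2-\cdots)\Big)\ast(\text{weight from }v)\,\text{d}\tau,
\]
with $\beta=2(1-b)<1$.

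After applying Lemma \ref{integrale} to kill the $\tau$-integral (with $A$ a sum of squares of the three frequencies involved, as in \eqref{I1}), the estimate reduces to showing an inequality of the schematic shape
\[
\sum_{(n,m)\in\Z^2}\frac{\<n+m\>^{2s}}{\<n+m\>^{2(-s)}\,\<n^2+m^2\>^{\beta}}\,|\a_n|^2\,\big|\widetilde v\text{-weight at }m\big|^2\;\lesssim\;\|f\|_{\H^{s_0,p}}^2\,\|v\|_{X^{s,b}_1}^2 .
\]
Here $v$ carries weight $\<m\>^{2s}\<\tau+m^2\>^{2b}$ in its norm; the crucial point is that after the Lemma \ref{integrale} step the relevant power of $\<m\>$ from the denominator $\<n^2+m^2\>^{\beta}\geq\<n\>^{\beta}\<m\>^{\beta}$ combines with the $\<n+m\>^{\text{stuff}}$ factor. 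I would use $\<n^2+m^2\>^{\beta}\geq\<n\>^{\beta}\<m\>^{\beta}$ to split off a $\<n\>^{-\beta}$ attached to the $|\a_n|^2$ factor and a $\<m\>^{-\beta}$ attached to the $v$-factor, then estimate the $n$-sum against $\|f\|_{\H^{s_0,p}}^2$ by Hölder plus Young's inequality in $\ell^p$ (choosing the conjugate exponents as in \eqref{j}–\eqref{jj}, with the summable tail $\sum\<j\>^{-2\s r_1}$ requiring $\s r_1>1/2$), and estimate the $m$-sum against $\|v\|_{X^{s,b}_1}^2$. Tracking the exponents, the convergence conditions become, after setting $b=1+s_0$ so that $\beta=-2s_0$: one needs $-s-s_0-1/p>-1/6$ type inequality, which is precisely \eqref{condprop}; the factor $-1/6$ comes from the elementary bound on the number of lattice points $(n,m)$ with $n^2+m^2$ near a given value (equivalently the exponent gain in $\sum_n\<n^2+m^2\>^{-\beta}$-type sums), used to close the $n$-summation when $\s$ is small.

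The main obstacle I expect is bookkeeping the several Hölder/Young exponents so that the two factors genuinely separate into a pure $\|f\|_{\H^{s_0,p}}$ piece and a pure $\|v\|_{X^{s,b}_1}$ piece while still satisfying the summability constraint $\s r_1>1/2$; this is where the quantitative form of \eqref{condprop} (the $-1/6-s_0-1/p$ threshold) is forced, and it is the analogue of the $s<-1+2/p$ threshold in Proposition \ref{propPicard} but now coupled to the regularity $s$ of $v$ rather than of $u_0$. A secondary technical point is that, unlike in Lemma \ref{cp} where both factors were the same Schwartz function $\wh\psi$, here one factor is a Schwartz bump and the other is the honest modulation weight $\<\tau+m^2\>^{-\beta}$ of $v$; the Cauchy–Schwarz decoupling of the cross term in the square still goes through because $\<2k(2n+k+q)\>^{N}$ can be split as before, but one must be slightly careful that $\beta<1$ leaves no room to waste, so the Schwartz decay of $\wh\psi$ (which can be taken with $N$ arbitrarily large) is what does all the off-diagonal work, while the $v$-weight is only ever used on the diagonal. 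Once the arithmetic inequality is proved, combining it with Proposition \ref{EstInt} as in \eqref{somme} finishes the proof.
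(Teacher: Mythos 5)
Your plan transplants the proof of Proposition \ref{propPicard} (via Lemma \ref{cp}) to the bilinear setting, but the key step does not survive the replacement of one free evolution by a general $v\in X^{s,b}_{1}$. In Lemma \ref{cp} the off-diagonal terms of $|\wh{c_p}(\tau)|^{2}$ are killed because \emph{both} factors contribute a bump $\wh{\psi}(\tau-n^{2}-(n+p)^{2})$ sharply localised at an explicit modulation, so for distinct frequencies the product of the two bumps decays like $\<2k(2n+k+p)\>^{-N}$. Here the $k$-th mode of $\psi_{0}v$ is an arbitrary function $\wh{b_{k}}$ controlled only in the weighted $L^{2}_{\tau}$ sense $\int\<\tau+k^{2}\>^{2b}|\wh{b_{k}}(\tau)|^{2}\dd\tau<\infty$; it has no pointwise concentration. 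Hence after squaring $\wh{c_{n}}(\tau)=\sum_{k}\ov{a_{-n-k}}\,\ov{\wh{b_{k}}}(-\tau+(n+k)^{2})$ the cross terms contain $\wh{b_{k}}\,\ov{\wh{b_{k'}}}$ with at most one Schwartz factor in sight, and the claimed diagonalisation fails: there is nothing to do ``all the off-diagonal work''. For the same reason Lemma \ref{integrale} cannot be applied to the $\tau$-integral (its hypothesis is that the integrand is Schwartz), and the reduction to a sum weighted by $\<n^{2}+m^{2}\>^{-\beta}$ alone is not the correct quantity; one must retain both modulation weights $\<-\tau+(n+k)^{2}+n^{2}\>^{-2(1-b)}$ and $\<\tau-(n+k)^{2}+m^{2}+(n+m)^{2}\>^{-2b}$.

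The workable route (and the paper's) is the standard one for $X^{s,b}$ bilinear estimates: apply Cauchy--Schwarz in the convolution variable $k$ \emph{before} squaring, inserting and removing the weight $\<k\>^{s}\<-\tau+(n+k)^{2}+k^{2}\>^{b}$, so that one factor reassembles exactly into $\|v\|^{2}_{X^{s,b}_{1}}$ after a change of variables, while the other becomes a supremum over $(k,\tau)$ of a sum over $(n,m)$ of $\<n\>^{2\s}$-free terms $\frac{\<n+m\>^{2\s}}{\<n\>^{2\s}}\,\frac{|a_{m}|^{2}}{\<R_{1}\>^{2(1-b)}\<R_{2}\>^{2b}}$ with $R_{1}=-\tau+(n+k)^{2}+n^{2}$ and $R_{2}=\tau-(n+k)^{2}+m^{2}+(n+m)^{2}$. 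Summing in $n$ then requires the Kenig--Ponce--Vega counting bound $\sup_{y,z}\sum_{n}\<z+n(n-y)\>^{-\gamma}<\infty$ for $\gamma>\frac12$ (Lemma \ref{LemmaKPV}, Corollary \ref{CorKPV}), which your outline never invokes, together with an interpolation spending a fraction $\theta\approx\frac13$ of the modulation decay on that summation; the leftover $\<m\>^{-\frac43+2\s}$ combined with H\"older in $\ell^{p}$ is what produces the threshold $-\frac16-s_{0}-\frac1p$ (it is not a lattice-point count for $n^{2}+m^{2}$ near a fixed value), and the resonant case $m=k$, where $R_{2}$ degenerates to $\tau+k^{2}$, needs a separate treatment via Lemma \ref{LemmeInt}. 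Note finally that here $b$ must be taken of the form $\frac12+\eps$ with $\eps$ small, not $b=1+s_{0}$ as in Proposition \ref{propPicard}.
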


\noindent Proposition \ref{bilinear} shows that, under condition \eqref{condprop},  the term
\begin{equation*}
\int_{0}^{t}\e^{i(t-t')\Delta}\ov{u_{0}}\,\ov{v}(t',\cdot)\text{d}t',
\end{equation*}
has the regularity of $v$, even if $f$ is less regular. For instance, with $p=2$ and $s=0$, we obtain 
\begin{equation*}
\|\int_{0}^{t}\e^{i(t-t')\Delta}\ov{u_{0}}\,\ov{v}(t',\cdot)\text{d}t'\|_{X^{0,b}_{1}}  \lesssim 
\|f\|_{ H^{s_{0}}}\|v\|_{X^{0,b}_{1}},
\end{equation*}
whenever $s_{0}>-\frac12-\frac16$.\\[5pt]
\noindent We now state a few technical results.\\[2pt]
\noindent We will need the following lemma which is proved in \cite{KPV}.

\begin{lemm}\label{LemmaKPV}
If $\gamma>\frac12$, then we have
\begin{equation}\label{SomUnif}
\sup_{y\in \R}\;\sum_{n\in \Z}\frac1{\< n-y\>^{2\gamma}}<\infty,
\end{equation}
and
\begin{equation}\label{EstKPV}
\sup_{(y,z)\in \R^{2}}\;\sum_{n\in \Z}\frac1{\< z+n(n-y)\>^{\gamma}}<\infty.
\end{equation}
\end{lemm}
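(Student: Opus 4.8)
The plan is to deduce both bounds from one counting principle: a polynomial of degree at most $2$ with leading coefficient $\pm1$ takes any prescribed dyadic size on only $O(\sqrt{\text{that size}})$ integers; given this, the sums are finished by summing the dyadic blocks. For \eqref{SomUnif} I would first reduce to $0\le y<1$: replacing $n$ by $n-\lfloor y\rfloor$ leaves the sum unchanged and replaces $y$ by its fractional part $\theta\in[0,1)$. Then, from $\langle n\rangle=\langle (n-\theta)+\theta\rangle\le\langle n-\theta\rangle\langle\theta\rangle\le\sqrt2\,\langle n-\theta\rangle$, one gets $\sum_{n\in\Z}\langle n-\theta\rangle^{-2\gamma}\le 2^{\gamma}\sum_{n\in\Z}\langle n\rangle^{-2\gamma}$, and the right-hand side is finite and independent of $\theta$ precisely because $2\gamma>1$.

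For \eqref{EstKPV} I would complete the square, writing $\phi(n):=z+n(n-y)=(n-a)^{2}+w$ with $a=y/2$ and $w=z-y^{2}/4$, so that $\phi$ is a monic quadratic in $n$ and the pair $(a,w)$ ranges over all of $\R^{2}$ as $(y,z)$ does. For $j\ge0$ set $A_{j}=\{n\in\Z:\ 2^{j}\le\langle\phi(n)\rangle<2^{j+1}\}$; since $\langle\cdot\rangle\ge1$ these sets cover $\Z$, and on $A_{j}$ one has $\langle\phi(n)\rangle^{-\gamma}\le2^{-j\gamma}$, whence
\[
\sum_{n\in\Z}\frac{1}{\langle\phi(n)\rangle^{\gamma}}\ \le\ \sum_{j\ge0}2^{-j\gamma}\,\#A_{j}.
\]
The crux is the uniform estimate $\#A_{j}\lesssim2^{j/2}$. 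If $n\in A_{j}$ then $|\phi(n)|\le2^{j+1}$, so $(n-a)^{2}$ lies in the interval $[\alpha,\beta]:=[-w-2^{j+1},\,-w+2^{j+1}]\cap[0,\infty)$, which is empty (and $A_j=\varnothing$) unless $-w+2^{j+1}\ge0$, in which case $0\le\alpha\le\beta$ and $\beta-\alpha\le2^{j+2}$. The preimage of $[\alpha,\beta]$ under $t\mapsto(t-a)^{2}$ is a union of at most two intervals of total length $2(\sqrt\beta-\sqrt\alpha)\le2\sqrt{\beta-\alpha}\le2^{(j+4)/2}$, hence contains at most $2^{(j+4)/2}+2\lesssim2^{j/2}$ integers. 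Inserting this bound gives $\sum_{n}\langle\phi(n)\rangle^{-\gamma}\lesssim\sum_{j\ge0}2^{-j(\gamma-1/2)}<\infty$ since $\gamma>1/2$, with a constant independent of $a,w$, hence of $y,z$.

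The only step requiring a little care is the length bound for the preimage of an interval under the squaring map: one uses the elementary inequality $\sqrt\beta-\sqrt\alpha\le\sqrt{\beta-\alpha}$ for $0\le\alpha\le\beta$, together with the observation that, whether or not the window $[-w-2^{j+1},-w+2^{j+1}]$ dips below $0$, after intersecting with $[0,\infty)$ its length never exceeds $2^{j+2}$. Beyond this bookkeeping the argument is purely elementary, and I do not expect any genuine obstacle; alternatively one could simply quote the classical fact about the number of integers on which a monic quadratic takes values in an interval of given length, which is exactly the counting bound used above.
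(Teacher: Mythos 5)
Your proof is correct, and it is worth noting where it coincides with and where it departs from the paper's argument. For \eqref{SomUnif} you do exactly what the paper does: reduce to $y\in[0,1)$ by an integer shift and compare $\langle n-y\rangle$ with $\langle n\rangle$. For \eqref{EstKPV}, however, your route is genuinely different. The paper factors the monic quadratic over $\C$ as $z+n(n-y)=(n-r_{1})(n-r_{2})$, discards the $O(1)$ indices with $|n-r_{i}|\le 2$, uses $\langle (n-r_{1})(n-r_{2})\rangle\gtrsim\langle n-r_{1}\rangle\langle n-r_{2}\rangle$ for the rest, and closes by Cauchy--Schwarz, reducing everything to \eqref{SomUnif} (applied to the real parts of the roots). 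You instead complete the square and run a dyadic level-set count, showing that the set where $\langle\phi(n)\rangle\sim 2^{j}$ has at most $O(2^{j/2})$ integers because the preimage of an interval of length $L$ under $t\mapsto(t-a)^{2}$ has total length $\le 2\sqrt{L}$; the condition $\gamma>\tfrac12$ then enters as convergence of $\sum_{j}2^{-j(\gamma-1/2)}$. Your counting step is sound (the inequality $\sqrt{\beta}-\sqrt{\alpha}\le\sqrt{\beta-\alpha}$ and the ``length plus two endpoints'' bound are exactly what is needed), and the resulting constant is visibly uniform in $(y,z)$. The trade-off: the paper's factorization argument is shorter and recycles \eqref{SomUnif} directly, while your dyadic counting is more elementary (no complex roots, no Cauchy--Schwarz) and makes transparent why the exponent $\tfrac12$ is the threshold --- it is the reciprocal of the degree of the polynomial, so the same scheme would give $\sum_{n}\langle P(n)\rangle^{-\gamma}<\infty$ for $\gamma>1/d$ for any monic real polynomial $P$ of degree $d$, a generality the paper's two-root factorization does not immediately provide.
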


\begin{proof}
$\bullet$ Let $y\in \R$. Up to a shift in $n$, we can assume that $y\in [0,1[$. Then $\< n-y\>\geq \frac12\< n\>$, hence the estimate \eqref{SomUnif}.\\[5pt]
$\bullet$ Denote by $r_{1}=r_{1}(y,z)$ and $r_{2}=r_{2}(y,z)$ the complex roots of the polynomial $z+X(X-y)$. Then 
\begin{equation*}
 z+n(n-y)=(n-r_{1})(n-r_{2}).
\end{equation*}
There are at most 10 indexes $n$ such that $|n-r_{1}|\leq 2$ or  $|n-r_{2}|\leq 2$. The remaining $n'$s satisfy
\begin{equation*}
\big\< (n-r_{1})(n-r_{2})\big\>\geq \frac12 \< n-r_{1}\>\< n-r_{2}\>.
\end{equation*}
Hence by the Cauchy-Schwarz inequality
\begin{equation*}
\sum_{n\in \Z}\frac1{\< z+n(n-y)\>^{\gamma}}\lesssim\Big( \sum_{n\in \Z}\frac1{\<n-r_{1} \>^{2\gamma}}\Big)^{\frac12}\Big(\sum_{n\in \Z}\frac1{\<n-r_{2} \>^{2\gamma}}\Big)^{\frac12},
\end{equation*}
which yields the result by  \eqref{SomUnif}.
\end{proof}

\begin{coro}\label{CorKPV}
If $\gamma_{1},\gamma_{2}>\frac12$, then 
\begin{equation}\label{EstCor1}
\sup_{(k,\tau)\in \Z\times \R}\;\sum_{n\in \Z}\frac1{\< -\tau+(n+k)^{2}+n^{2}\>^{\gamma_{1}}}<\infty,
\end{equation}
and
\begin{equation}\label{EstCor2}
\sup_{(m,k,\tau)\in \Z^{2}_{*}\times \R}\;\sum_{n\in \Z}\frac1{\< \tau-(n+k)^{2}+(n+m)^{2}+m^{2}\>^{2\gamma_{2}}}<\infty,
\end{equation}
where $\Z^{2}_{*}=\{(m,k)\in \Z^{2},\;\text{s.t.}\;m\neq k\}$.
\end{coro}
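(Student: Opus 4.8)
The strategy is to reduce both bounds to the two estimates of Lemma~\ref{LemmaKPV} by completing the square in the summation variable~$n$ so that the phase appearing inside the bracket takes the form $z + n(n-y)$ for suitable $y,z$ depending on the frozen parameters.

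For \eqref{EstCor1}, I expand $(n+k)^2 + n^2 = 2n^2 + 2kn + k^2$, so that
\begin{equation*}
-\tau + (n+k)^2 + n^2 = 2\Big(n^2 + kn + \frac{k^2 - \tau}{2}\Big) = 2\big(z + n(n-y)\big)
\end{equation*}
with $y = -k$ and $z = (k^2-\tau)/2$. Since $\<2A\> \gtrsim \<A\>$, the sum in \eqref{EstCor1} is controlled by $\sum_{n}\<z+n(n-y)\>^{-\gamma_1}$, which by \eqref{EstKPV} (using $\gamma_1 > \tfrac12$) is bounded uniformly in $(y,z)$, hence uniformly in $(k,\tau)$.

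For \eqref{EstCor2}, the point is that the quadratic terms partially cancel:
\begin{equation*}
-(n+k)^2 + (n+m)^2 + m^2 = 2(m-k)n + (m^2 - k^2) + m^2,
\end{equation*}
so that $n$ enters only linearly and the bracket becomes $\<\tau + 2(m-k)n + (2m^2 - k^2)\>$. Here the hypothesis $m \neq k$ (i.e.\ $(m,k)\in\Z^2_*$) is essential: it guarantees $2(m-k) \neq 0$, so after the change of variable the bracket is $\<2(m-k)\big(n + c\big)\>$ for a constant $c = c(\tau,m,k)$, and $\<2(m-k)(n+c)\> \geq \<n+c\>$ when $|m-k|\geq 1$. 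Thus $\sum_n \<\tau-(n+k)^2+(n+m)^2+m^2\>^{-2\gamma_2} \lesssim \sum_n \<n+c\>^{-2\gamma_2}$, which by \eqref{SomUnif} (with $\gamma = 2\gamma_2 > \tfrac12$, indeed $> 1$) is bounded uniformly in the shift~$c$, hence uniformly in $(m,k,\tau)$.

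I do not anticipate a serious obstacle here; both parts are routine once one sees which parameter completes the square. The only point requiring a little care is the second one: one must check that the degeneracy $m = k$ is exactly what is excluded by $\Z^2_*$, and that in the linear-in-$n$ case one should invoke \eqref{SomUnif} rather than \eqref{EstKPV} (the latter would also work but \eqref{SomUnif} is the cleaner tool). I would also remark that the finitely many exceptional indices in the proof of \eqref{EstKPV} (those with $|n-r_i|\le 2$) contribute a uniformly bounded number of terms each of size $O(1)$, so they cause no trouble.
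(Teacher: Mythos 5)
Your proof is correct and takes essentially the same route as the paper: the same completion of the square reduces \eqref{EstCor1} to \eqref{EstKPV} with $y=-k$, $z=(k^{2}-\tau)/2$, and the same factorization $2(m-k)(n+c)$ with $|m-k|\geq 1$ reduces \eqref{EstCor2} to \eqref{SomUnif}. (One cosmetic slip: to obtain the exponent $2\gamma_{2}$ from \eqref{SomUnif} you should take $\gamma=\gamma_{2}$, not $\gamma=2\gamma_{2}$; the conclusion is unaffected.)
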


\begin{proof}
$\bullet$ We first prove the estimate \eqref{EstCor1}. For all $\tau, n,k$ we have 
\begin{equation*}
\<-\tau+(n+k)^{2}+n^{2}\>=\big\<-\tau+k^{2}+2n(n+k)\>\gtrsim\<\frac{-\tau+k^{2}}2+n(n+k)\big\>.
\end{equation*}
The estimate then follows from  \eqref{EstKPV} with $\gamma=\gamma_{1}>\frac12$, $y=-k$ and $z=(-\tau+k^{2})/2$.\\[5pt]
$\bullet$ We now turn to the proof of \eqref{EstCor2}. If $m\neq k$ are integers,  then $|m-k|\geq 1$ and thus
\begin{eqnarray*}
|\tau-(n+k)^{2}+(n+m)^{2}+m^{2}|&=&2|m-k|\big|\frac{\tau-k^{2}+2m^{2}}{2(m-k)}+n\big|\\
&\geq &|C+n|,
\end{eqnarray*}
with $C=(\tau-k^{2}+2m^{2})/(2(m-k))$. Therefore
\begin{equation*}
\<  \tau-(n+k)^{2}+(n+m)^{2}+m^{2}   \>\geq \<   n+C\>,
\end{equation*}
and the estimate follows from an application of  \eqref{SomUnif}.
\end{proof}

\begin{lemm}\label{LemmeInt}
If $\gamma>\frac12$, then 
\begin{equation*}
\sum_{n\in \Z}\frac1{\< n^{2}+y^{2}\>^{\gamma}}\lesssim \frac{1}{\<y\>^{2\gamma-1}}.
\end{equation*}
\end{lemm}

\begin{proof}
We can assume that $y>0$. We compare the sum with an integral, and with the change of variables $x=yt$ we obtain
\begin{eqnarray*}
\sum_{n\in \Z}\frac1{\< n^{2}+y^{2}\>^{\gamma}}&\lesssim &\sum_{n\in \N}\frac1{\< n^{2}+y^{2}\>^{\gamma}}\lesssim \int_{0}^{{+\infty}}\frac{\text{d}x}{\<x^{2}+y^{2}\>^{\gamma}}\\
&\lesssim&\frac{1}{\<y\>^{{2\gamma-1}}} \int_{0}^{{+\infty}}\frac{\text{d}t}{(t^{2}+1)^{\gamma}}\lesssim \frac{1}{\<y\>^{{2\gamma-1}}},
\end{eqnarray*}
which was the claim.
\end{proof}

\begin{proof}[Proof of Proposition \ref{bilinear}] Let $f\in\H^{s_{0},p}(\T)$ and write 
\begin{equation*}
f(x)=\sum_{n\in \Z}a_{n}\e^{inx}.
\end{equation*}
Denote by $u_{0}(t)=\e^{it\Delta}f$ the free Schr\"odinger evolution of $f$. Then
\begin{equation}\label{Q1}
u_{0}(t,x)=\e^{it\Delta}f(x)=\sum_{n\in \Z}a_{n}\e^{-in^{2}t}\e^{inx}.
\end{equation}
Let  $v\in X^{s,b}_{1}(\R\times \T)$, and let  $\psi_{0} \in \mathcal{C}^{\infty}_{0}(\R)$ be so that $\psi_{0}=1$ on $[-1,1]$ and $\text{supp}\;\psi_{0} \subset [-2,2]$. Moreover, we choose $\psi_{0}$ such that

\begin{equation}\label{Defpsi0}
\|v\|^{2}_{X^{s,b}_{1}}=\|\psi_{0}(t)\,v\|^{2}_{X^{s,b}}.
\end{equation}

\noindent Then we consider the following Fourier expansion
\begin{equation}\label{Q2}
\psi_{0}(t)\,v(t,x)=\sum_{n\in \Z}b_{n}(t)\e^{inx}.
\end{equation}
Thus by Definition \ref{defi} and \eqref{Defpsi0} we have
\begin{equation}\label{Normv}
\|v\|^{2}_{X^{s,b}_{1}}= \|\psi_{0}(t)\,v\|^{2}_{X^{s,b}}=\sum_{n \in \Z} \int_{\R}\<\tau+n^{2}\>^{2b}\<n\>^{2s}|\widehat{b}_{n}(\tau)|^{2}\text{d}\tau.
\end{equation}

\noindent Now, use the expressions \eqref{Q1} and \eqref{Q2} to  compute
\begin{eqnarray*}
\psi_{0}(t)\,u_{0}\,v(t,x)&=&\sum_{(j,k)\in \Z^{2}}a_{j}\,b_{k}(t)\e^{-itj^{2}}\e^{i(j+k)x}\\
&=&\sum_{n\in \Z}\Big(\sum_{k\in \Z}a_{-n-k}\,b_{k}(t)\e^{-it(n+k)^{2}}\Big)\e^{-inx},
\end{eqnarray*}
therefore 
\begin{equation}\label{decomp}
\psi_{0}(t)\,\ov{u_{0}}\,\ov{v}(t,x)=\sum_{n\in \Z}c_{n}(t)\e^{inx},
\end{equation}
with 
\begin{equation*}
c_{n}(t)=\sum_{k\in \Z}\ov{a_{-n-k}}\,\ov{b_{k}}(t)\e^{it(n+k)^{2}}.
\end{equation*}
Now from the properties \eqref{TF} of the time-Fourier transform, we  deduce

\begin{eqnarray*}
\widehat{c_{n}}(\tau)&=&\sum_{k\in \Z}\,\ov{a_{-n-k}}\;\widehat{  \ov { b_{k}(t)\e^{-it(n+k)^{2}} } }(\tau)
=\sum_{k\in \Z}\ov{a_{-n-k}}\;\ov {\widehat{  b_{k}(t)\e^{-it(n+k)^{2}} } }(-\tau)\\
&=&\sum_{k\in \Z} \ov{ a_{-n-k}}\;\ov{\widehat{b_{k}}}(-\tau+(n+k)^{2}).
\end{eqnarray*}
Now write
\begin{multline*}
\widehat{c_{n}}(\tau)=\\
\sum_{k\in \Z}\frac{ \ov{ a_{-n-k}}}{\<k\>^{s}\<-\tau+(n+k)^{2}+k^{2}\>^{b}}\;
\<k\>^{s}\<-\tau+(n+k)^{2}+k^{2}\>^{b}\ov{\widehat{b_{k}}}(-\tau+(n+k)^{2}),
\end{multline*}
and by the Cauchy-Schwarz inequality we obtain
\begin{equation}\label{CS}
|\widehat{c_{n}}(\tau)|^{2}\leq \Big(      \sum_{j\in\Z} A_{j,n}(\tau)\Big)
\Big( \sum_{k\in\Z}B_{k,n}(\tau)  \Big),
\end{equation}
where

\begin{equation}\label{defA}
A_{j,n}(\tau)=\frac{ |a_{-n-j  }  |^{2}}{\<j\>^{2s}\<-\tau+(n+j)^{2}+j^{2}\>^{2b}},
\end{equation}
and
\begin{equation}\label{defB}
B_{k,n}(\tau)=\<k\>^{2s}\<-\tau+(n+k)^{2}+k^{2}\>^{2b}|{\widehat{b_{k}}}|^{2}(-\tau+(n+k)^{2}).
\end{equation}

\noindent Now by Proposition \ref{EstInt}, for $\frac12<b<1$ and $s\in \R$

\begin{equation*}
\|\int_{0}^{t}\e^{i(t-t')\Delta}\ov{u_{0}}\,\ov{v}(t',\cdot)\text{d}t'\|_{X^{s,b}_{1}}  \lesssim \| \ov{u_{0}v}\|_{X^{s,b-1}_{1}}\leq 
\|\psi_{0}(t)\, \ov{u_{0}}\,\ov{v}\|_{X^{s,b-1}},
\end{equation*}
where the second inequality is a consequence of Definition \ref{defi}. \\[5pt]
Then by \eqref{decomp} and \eqref{CS} we obtain
\begin{eqnarray*}
\|\psi_{0}(t)\, \ov{u_{0}}\,\ov{v}\|^{2}_{X^{s,b-1}}&=&\sum_{n\in \Z}\int_{\R}\<\tau+n^{2}\>^{2(b-1)}\<n\>^{2s}|\widehat{c_{n}}(\tau)|^{2}\text{d}\tau\\
&\leq&\sum_{n\in \Z}\int_{\R}\frac{\<n\>^{2s}}{\<\tau+n^{2}\>^{2(1-b)}}\Big(      \sum_{j\in\Z} A_{j,n}(\tau)\Big)
\Big( \sum_{k\in\Z}B_{k,n}(\tau) \Big)\text{d}\tau\\
&=&\sum_{k\in \Z}\;\sum_{n\in \Z}\int_{\R}\Bigg(\sum_{j\in\Z}\frac{\<n\>^{2s}  A_{j,n}(\tau)}{\<\tau+n^{2}\>^{2(1-b)}}    \Bigg)
B_{k,n}(\tau) \text{d}\tau.
\end{eqnarray*}
Now, thanks to the change of variables $\tau'=-\tau+(n+k)^{2}$ and \eqref{defB} we deduce
\begin{multline*}
\| \psi_{0}(\frac{t}T)\, \ov{u_{0}}\,\ov{v}\|^{2}_{X^{s,b-1}}\leq\\
\begin{aligned}
&\leq\sum_{k\in \Z}\;\sum_{n\in \Z}\int_{\R}\Bigg(\sum_{j\in\Z}\frac{\<n\>^{2s}  A_{j,n}(-\tau'+(n+k)^{2})}{\<-\tau'+(n+k)^{2}+n^{2}\>^{2(1-b)}}    \Bigg)
B_{k,n}(-\tau'+(n+k)^{2}) \text{d}\tau'\\
&=\sum_{k\in \Z}{\int_{\R}}\;\Bigg(\sum_{(n,j)\in \Z^{2}}\frac{\<n\>^{2s}  A_{j,n}(-\tau'+(n+k)^{2})}{\<-\tau'+(n+k)^{2}+n^{2}\>^{2(1-b)}}    \Bigg)
\<k\>^{2s}\<\tau'+k^{2}\>^{2b}|{\widehat{b_{k}}}|^{2}(\tau') \text{d}\tau'\\
&\leq\sup_{(k,\tau)\in \Z\times \R}\Bigg[\;\sum_{(n,j)\in \Z^{2}}\frac{\<n\>^{2s}  A_{j,n}(-\tau+(n+k)^{2})}{\<-\tau+(n+k)^{2}+n^{2}\>^{2(1-b)}}    \Bigg]
 \sum_{k\in \Z} \int_{\R}\<k\>^{2s}\<\tau'+k^{2}\>^{2b}|{\widehat{b_{k}}}|^{2}(\tau') \text{d}\tau'\\
 &= \|v\|^{2}_{X^{s,b}_{1}}\sup_{(k,\tau)\in \Z\times \R}\Bigg[\;\sum_{(n,j)\in \Z^{2}}\frac{\<n\>^{2s}  A_{j,n}(-\tau+(n+k)^{2})}{\<-\tau+(n+k)^{2}+n^{2}\>^{2(1-b)}}    \Bigg],
 \end{aligned}
 \end{multline*}
by \eqref{Normv}. \\
It remains to estimate the term
\begin{equation*}
I(k,\tau):=\sup_{(k,\tau)\in \Z\times \R}\Bigg[\;\sum_{(n,j)\in \Z^{2}}\frac{\<n\>^{2s}  A_{j,n}(-\tau+(n+k)^{2})}{\<-\tau+(n+k)^{2}+n^{2}\>^{2(1-b)}}    \Bigg],
\end{equation*} 
uniformly in $(k,\tau)\in \Z\times \R$.\\
By the definition \eqref{defA} of $A_{j,n}$ and the change of indexes $m=-n-j$, we have
\begin{equation}\label{bound}
\begin{aligned}
&I(k,\tau)=\\
&=\sum_{(n,j)\in \Z^{2}}\frac{\<n\>^{2s}  |a_{-n-j  }  |^{2}}{\<j\>^{2s}\<-\tau+(n+k)^{2}+n^{2}\>^{2(1-b)}\<\tau-(n+k)^{2}+(n+j)^{2}+j^{2}\>^{2b} }\\ 
&=\sum_{(n,m)\in \Z^{2}}\frac{\<n\>^{2s}  |a_{m }  |^{2}}{\<n+m\>^{2s}\<-\tau+(n+k)^{2}+n^{2}\>^{2(1-b)}\<\tau-(n+k)^{2}+m^{2}+(n+m)^{2}\>^{2b} }\\
&:=\sum_{(n,m)\in \Z^{2}}I_{n,m}(k,\tau).
\end{aligned}
\end{equation}

\noindent Denote by 
\begin{equation*}
R_{1}=R_{1}(\tau,n,k)=-\tau+(n+k)^{2}+n^{2},
\end{equation*}
\begin{equation*}
 R_{2}=R_{2}(\tau,n,k,m)=\tau-(n+k)^{2}+m^{2}+(n+m)^{2}.
\end{equation*}
Denote by $\s=-s>0$ and $\s_{0}=-s_{0}\geq 0$. Write $b=\frac12+\eps$. Then introduce  
\begin{equation*}
\b_{1}=2(1-b)=1-2\eps<1\quad \text{and}\quad \b_{2}=2b=1+2\eps>1.
\end{equation*}
Therefore, $I_{n,m}$ can be rewritten
\begin{equation}\label{defI}
 I_{n,m}(k,\tau)=\frac{\<n+m\>^{2\s}}{\<n\>^{2\s}}\frac{|a_{m}|^{2}}{\<R_{1}\>^{\b_{1}}\<R_{2}\>^{\b_{2}}}.
\end{equation}

\noindent $\bullet$ Observe that $\b_{1}\leq \b_{2}$. Thus by \eqref{defI}, for all $m\neq k$ and $0\leq \theta\leq 1$
\begin{eqnarray} \label{maj}
\sum_{n\in \Z} I_{n,m}(k,\tau)&\leq&|a_{m}|^{2}\sum_{n\in \Z}   \frac{\<n+m\>^{2\s}}{\<n\>^{2\s}}\frac{1}{\<R_{1}\>^{\beta_{1}}}\frac{1}{\<R_{2}\>^{\beta_{1}}}\nonumber\\
&\leq&   |a_{m}|^{2} \sup_{n\in \Z}\Big[ \frac{\<n+m\>^{2\s}}{\<n\>^{2\s}}\frac{1}{\<R_{1}\>^{(1-\theta)\beta_{1}}}\frac{1}{\<R_{2}\>^{(1-\theta)\beta_{1}}}\Big]
\sum_{n\in \Z}\frac{1}{\<R_{1}\>^{\theta\beta_{1}}}\frac{1}{\<R_{2}\>^{\theta\beta_{1}}}.
\end{eqnarray}

\noindent For $p,q\geq 1$ such that $1/p+1/q=1$ we have the H\"older inequality
\begin{equation}\label{sums}
\sum_{n\in \Z}\frac{1}{\<R_{1}\>^{\theta\beta_{1}}}\frac{1}{\<R_{2}\>^{\theta\beta_{1}}}\leq \Big(\sum_{n\in \Z}\frac{1}{\<R_{1}\>^{\theta\beta_{1}p}}\Big)^{\frac1p}\Big(\sum_{n\in \Z}\frac{1}{\<R_{2}\>^{\theta\beta_{1}q}}\Big)^{\frac1q}.
\end{equation}
Now choose $p,q$ such that $\theta \b p=\frac12+\eps$ and $\theta \b q=1+2\eps$, i.e.
\begin{equation*}
p=\frac32,\quad q=3,\quad \text{and thus }\quad \theta=\frac{1+2\eps}{3(1-2\eps)}.
\end{equation*}
(Notice that $0\leq \theta\leq 1$ if $\eps>0$ is small enough). With these choices, by Corollary \ref{CorKPV}, all the sums in \eqref{sums} are uniformly bounded with respect to $(m,k,\tau)\in \Z^{2}_{*}\times \R$. Therefore, for $m\neq k$ we have
\begin{equation}\label{Inm}
\sum_{n\in \Z} I_{n,m}(k,\tau)\lesssim |a_{m}|^{2} \sup_{n\in \Z}\Big[ \frac{\<n+m\>^{2\s}}{\<n\>^{2\s}}\frac{1}{\<R_{1}\>^{(1-\theta)\beta_{1}}}\frac{1}{\<R_{2}\>^{(1-\theta)\beta_{1}}}\Big].
\end{equation}

\noindent Now we bound the $\sup_{n\in\Z}$ in \eqref{Inm}. Notice  that we have 
  the inequalities
\begin{equation*}
\frac{1}{\<R_{1}\>}\frac{1}{\<R_{2}\>}\leq \frac{1}{\<R_{1}+R_{2}\>}
=\frac{1}{\<n^{2}+m^{2}+(n+m)^{2}\>}\lesssim \frac{1}{\<m\>^{2}},
\end{equation*}
and $\<n+m\> \lesssim \<n\>\<m\>$. Hence
 \begin{equation}\label{eq1}
\sup_{n\in \Z}\Big[ \frac{\<n+m\>^{2\s}}{\<n\>^{2\s}}\frac{1}{\<R_{1}\>^{(1-\theta)\beta_{1}}}\frac{1}{\<R_{2}\>^{(1-\theta)\beta_{1}}}\Big]\lesssim  \frac{1}{\<m\>^{2(1-\theta)\beta_{1}-2\s}}.
\end{equation}
Then thanks to \eqref{eq1}, for $m\neq k$,  \eqref{Inm} becomes 
\begin{equation*}
\sum_{n\in \Z}I_{n,m}(k,\tau)\lesssim
 \frac{|a_{m}|^{2}}{\<m\>^{2(1-\theta)\b_{1} -2\s}}= \frac{|a_{m}|^{2}}{\<m\>^{\frac43(1-4\eps)-2\s}},
\end{equation*}
and by summing up, we obtain
\begin{equation}\label{pr}
\sum_{(n,m)\in \Z^{2},m\neq k}I_{n,m}(k,\tau)\lesssim
\sum_{m\in \Z} \frac{ |a_{m }  |^{2}}{\<m\>^{\frac43(1-4\eps)-2\s}}=\sum_{m\in \Z} \frac{ |a_{m }  |^{2}}{\<m\>^{2\s_{0}}}\frac{ 1}{\<m\>^{\eta}},
\end{equation}
with
\begin{equation}\label{eta}
\eta=\frac43(1-4\eps)-2\s_{0}-2\s.
\end{equation}
Now apply  H\"older to \eqref{pr} : For all $p\geq 2$ and $1/q=1-2/p$ so that $q\eta>1$, we can write
\begin{eqnarray*}
\sum_{(n,m)\in \Z^{2},m\neq k}I_{n,m}(k,\tau)&\lesssim&\Big(\sum_{m\in \Z} \frac{ |a_{m }  |^{p}}{\<m\>^{\s_{0}p}}\Big)^{\frac2p}\Big(\sum_{j\in \Z} \frac{1}{\<j\>^{q\eta}}\Big)^{\frac1q}.
\end{eqnarray*}
By \eqref{eta}, the condition $q\eta>1$ is equivalent to 

\begin{equation*}
\frac43(1-4\eps)-2\s_{0}-2\s=\eta>\frac1q=1-\frac2p,
\end{equation*}
or 
\begin{equation}\label{condsigma}
\s< \frac16-\s_{0}+\frac1p  -\frac83\eps.
\end{equation}
Assume that \eqref{condprop} is satisfied. Then for $0<\eps\leq \eps_{1}$ (for $\eps_{1}$ small enough), the condition \eqref{condsigma} is also satisfied and we have
\begin{equation*}
\sum_{(n,m)\in \Z^{2},m\neq k}I_{n,m}(k,\tau)\lesssim
\|f\|^{2}_{H^{s_{0},p}}.
\end{equation*}

\noindent $\bullet$ We now consider the case $m=k$. \\[5pt]
By \eqref{bound}, we have to bound, uniformly in $(k,\tau)\in \Z\times \R$, the term
\begin{equation*}
\sum_{n\in \Z}I_{n,k}(k,\tau)=|a_{k }  |^{2}\sum_{n\in \Z}\frac{\<n+k\>^{2\s}}{\<n\>^{2\s}}\frac{ 1 }{\<-\tau+(n+k)^{2}+n^{2}\>^{\b_{1}}\<\tau+k^{2}\>^{\b_{2}} }.
\end{equation*}
By the  inequality $\<a+b\>\leq \<a\>\<b\>$ and Lemma \ref{LemmeInt} we obtain (recall that $\b_{1}=1-2\eps$)
\begin{eqnarray*}
\sum_{n\in \Z}I_{n,k}(k,\tau)&\leq&|a_{k }  |^{2}\sum_{n\in \Z}\frac{\<n+k\>^{2\s}}{\<n\>^{2\s}}\frac{ 1 }{\<k^{2}+(n+k)^{2}+n^{2}\>^{\b_{1}}}\\
& \leq&|a_{k }  |^{2}\sum_{n\in \Z}\frac{ \<n+k\>^{2\s}}{\<n\>^{2\s}\<k^{2}+n^{2}\>^{1-2\eps}}\\
&\lesssim&|a_{k }  |^{2}\sum_{n\in \Z}\frac{1}{\<n\>^{2\s}\<k^{2}+n^{2}\>^{1-\s-2\eps}}\\
&\lesssim&|a_{k }  |^{2}\sum_{n\in \N}\frac{1}{\<n\>^{2\s}\<k^{2}+n^{2}\>^{1-\s-2\eps}}.
\end{eqnarray*}
Now we compare this sums with an integral : Thanks to the change of variables $x=|k|\,y$ we obtain, as  $\s<\frac12$ 
\begin{eqnarray*}
\sum_{n\in \Z}I_{n,k}(k,\tau)&\lesssim & |a_{k }  |^{2}\int_{0}^{+\infty}\frac{\text{d}x}{\<x\>^{2\s}\<k^{2}+x^{2}\>^{1-\s-2\eps}}\\
&\lesssim &\frac{|a_{k }  |^{2}}{\<k\>^{1-4\eps}}\int_{0}^{+\infty}\frac{\text{d}y}{y^{2\s}\<1+y^{2}\>^{1-\s-2\eps}}\\
&\lesssim& \frac{|a_{k }  |^{2}}{\<k\>^{1-4\eps}}\lesssim \|f\|^{2}_{H^{s_{0},p}}\;,
\end{eqnarray*}
whenever $1-4\eps\geq 2\s_{0}=-2s_{0}$, i.e. for $0<\eps\leq \eps_{2}$.\\
Finally, set $b_{2}=\frac12+\eps$, with $\eps=\min{(\eps_{1},\eps_{2})}$. This concludes the proof.
\end{proof}

\section{Proof of the main theorem}\label{contrac}

We now have all the ingredients to prove Theorem \ref{theo2} (observe that  Theorem \ref{theo1} is a particular case of the latter).

\begin{proof}[Proof of Theorem \ref{theo2}] To take profit of the gain of regularity of the first Picard iterate ( Proposition \ref{propPicard}) we write $u=\e^{it\Delta}f+v$ and where $v$ lives in a smaller space than $u$.  This idea was used by N. Burq and N. Tzvetkov \cite{BT2, BT3} in the context of supercritical wave equations. \\
We plug this expression in the integral equation
\begin{equation*}
u=\e^{it\Delta}f-i\kappa\int_{0}^{t}\e^{i(t-t')\Delta}(\overline{u}^{2})(t',x)\text{d}t',
\end{equation*}
then we will show  that the map $K$ defined by
\begin{eqnarray*}
K(v)&=&-i\kappa\int_{0}^{t}\e^{i(t-t')\Delta}(\overline{u_{0}}^{2})(t',x)\text{d}t'-2i\kappa\int_{0}^{t}\e^{i(t-t')\Delta}\ov{u_{0}}\,\ov{v}(t',\cdot)\text{d}t'\\
&&-i\kappa\int_{0}^{t}\e^{i(t-t')\Delta}(\overline{v}^{2})(t',x)\text{d}t',
\end{eqnarray*}
is a contraction.\\
Let $p\geq 2$ and $s_{0}>-\frac12$ satisfy the condition \eqref{cond}, i.e.
\begin{equation*}
\frac3p+s_{0}>\frac56,
\end{equation*}
then there exists $s>-\frac12$ so that
\begin{equation*}
-\frac16-s_{0}-\frac1p<s<-1+\frac2p,
\end{equation*}
and we can use the estimates \eqref{FondaKPV}, \eqref{EstPic} and \eqref{Estproduit} to obtain : There exist $b>\frac12$ and $C\geq1$ such that
\begin{equation}\label{estK}
\|K(v)\|_{X^{s,b}_{1}}  \leq C\big(  \|f\|^{2}_{\H^{s_{0},p}}   + \|f\|_{\H^{s_{0},p}}\|v\|_{X^{s,b}_{1}}+\|v\|^{2}_{X^{s,b}_{1}}      \big),
\end{equation}
and
\begin{equation}\label{estK1}
\|K(v_{1})-K(v_{2})\|_{X^{s,b}_{1}}  \leq C\big( \|f\|_{\H^{s_{0},p}}+\|v_{1}+v_{2}\|_{X^{s,b}_{1}}     \big)\|v_{1}-v_{2}\|_{X^{s,b}_{1}}.
\end{equation}
~\\[4pt]
\noindent $\bullet$ The case of small initial data. We assume that $\|f\|_{\H^{s_{0},p}}=\mu\ll 1$. Then we show that $K$ is a contraction on the ball of radius $C\mu$ in $X^{s,b}$, for $\mu$ small enough. For $\|v_{1}\|_{X^{s,b}},\|v_{2}\|_{X^{s,b}+1}\leq C\mu $, we deduce from  \eqref{estK} and \eqref{estK1} that  
\begin{equation*}
\|K(v)\|_{X^{s,b}_{1}}  \leq C\big(\mu^{2}   + \mu\|v\|_{X^{s,b}_{1}}+\|v\|^{2}_{X^{s,b}_{1}}      \big)\leq 3C^{2}\mu^{2},
\end{equation*}
and
\begin{equation*}
\|K(v_{1})-K(v_{2})\|_{X^{s,b}_{1}}  \leq C\big( \mu+\|v_{1}+v_{2}\|_{X^{s,b}_{1}}     \big)\|v_{1}-v_{2}\|_{X^{s,b}}\leq 3C^{2}\mu\|v_{1}-v_{2}\|_{X^{s,b}_{1}},
\end{equation*}
and the result follows if we choose $\mu$ so that $3C^{2}\mu<1$.\\[2pt]
The argument to show the uniqueness of the solution in the whole space is similar to the argument given in \cite{KPV}, we do not give more details here.\\[5pt]
$\bullet$ The general case. Let $u$ be a solution of \eqref{NLS}, then for all $\lambda>0$, $u_{\lambda}$ defined by $u_{\lambda}(t,x)=\lambda^{2}u(\lambda^{2}t,\lambda x)$ in also a solution of the equation, but on a torus of period $2\pi/{\lambda}$. It is easy to check that the estimates \eqref{FondaKPV}, \eqref{EstPic} and \eqref{Estproduit} still hold uniformly w.r.t $\lambda>0$, if we replace $\R/(2\pi\Z)$ with   $\R/(\frac{2\pi}{\lambda}\Z)$ (see Molinet \cite{Molinet} for more details). Now as 
\begin{equation*}
\|f_{\lambda}\|_{\H^{s_{0},p}}=\|u_{\lambda}(0,\cdot)\|_{\H^{s_{0},p}}\sim \lambda^{1+s_{0}+\frac1p},
\end{equation*}
which tends to 0, we can apply the result of the previous case, and find  a unique solution $u\in X^{s,b}([-\lambda^{2},\lambda^{2}]\times \T)$, for $\lambda$ small enough.\\[5pt]
$\bullet$ The argument showing the regularity of the flow map is exactly the same as in \cite{KPV}, hence we omit the proof here.
\end{proof}

\begin{rema}
We may compute  the following Picard iterates of $u$. Therefore we could look for a solution to \eqref{NLS} of the form $u=u_{0}+u_{1}+\dots+u_{n}+v$, where the $u_{j}$'s are known explicitly and where the unknown $v$ in more regular than $u_{n}$.  A fixed point argument on $v$ would improve a bit the range \eqref{cond}. However we do not pursue this  strategy as we do not think this will give an optimal result.
\end{rema}

\begin{rema}
The conclusion of Theorem \ref{theo2} may be improved using estimates in $X^{s,b}_{{p,q}}$ space, i.e. $X^{s,b}$ spaces based on $L^{p}$ in the space frequency variable and $L^{q}$ in the variable $\tau$. See \cite{GrunHerr} for such a strategy for the DNLS equation. 
\end{rema}



\begin{thebibliography}{9}


 
\bibitem{BejenaruTao}
I.~Bejenaru, and T. Tao.
\newblock Sharp well-posedness and ill-posedness results for a quadratic non-linear Schr\"odinger equation.
\newblock {\em arXiv:math/0508210.}
\bibitem{Bourgain1:res}
J.~Bourgain.
\newblock Fourier transform restriction phenomena for certain lattice subsets
  and applications to nonlinear evolution equations. {I}. {S}chr\"odinger
  equations.
\newblock {\em Geom. Funct. Anal.}, 3(2):107--156, 1993.




\bibitem{BGT2}
N.~Burq, P.~G{\'e}rard and  N.~Tzvetkov.
\newblock  Eigenfunction estimates and the Nonlinear Schr\"odinger equations on surfaces.
\newblock {\em Invent. Math.},  159, no. 1, 187--223 126, 2005.



 

 \bibitem{BT2}
 N.~Burq and  N.~Tzvetkov.
 \newblock Random data Cauchy theory for supercritical wave equations  \nolinebreak[4 ] I: local
existence theory.
 \newblock{\em Invent. Math.} 173, No. 3, 449-475 (2008)

\bibitem{BT3}
 N.~Burq and  N.~Tzvetkov.
 \newblock Random data Cauchy theory for supercritical wave equations \nolinebreak[4 ] II:  A global existence result.
 \newblock{\em Invent. Math.} 173, No. 3, 477-496 (2008) 
 
 \bibitem{CaVeVi}
T.~Cazenave, L.~Vega and  M.C~Vilela.
\newblock A note on the nonlinear Schr\"odinger equation in weak $L^{p}$ spaces.
\newblock {\em Commun. Contemp. Math.} 3(1) : 153--162, 2001. 


\bibitem{CW}
T.~Cazenave and  F.~B.~Weissler.
\newblock The Cauchy problem for the critical nonlinear Schr\"odinger equation in $H\sp s$.
\newblock {\em Nonlinear Anal.} 14, no. 10, 807--836, 1990. 


\bibitem{Christ}
M.~Christ.
\newblock Power series of a nonlinear Schr\"odinger equation. 
\newblock {\em  Mathematical aspects of nonlinear dispersive equations,}  131--155, Ann. of Math. Stud., 163, Princeton Univ. Press, Princeton, NJ, 2007.

\bibitem{Ginibre}
 J.~Ginibre.
\newblock  Le probl\`eme de Cauchy pour des EDP semi-lin\'eaires p\'eriodiques en variables d'espace (d'apr\`es Bourgain).  
S\'eminaire Bourbaki, Vol. 1994/95.
\newblock {\em Ast\'erisque} No. 237 (1996), Exp. No. 796, 4, 163--187. 

\bibitem{GV}
 J.~Ginibre and   G.~Velo.
\newblock  The global Cauchy problem for the nonlinear Schr\"o\-dinger
equation. 
\newblock {\em Ann. I.H.P. Anal. non lin.}, 2:309--327, 1985.



\bibitem{Grun1}
 A.~Gr\"unrock.
\newblock An improved local well-posedness result for the modified KdV equation.
\newblock {\em  Int. Math. Res. Not. } 2004,  no. 61, 3287--3308.

\bibitem{Grun2}
 A.~Gr\"unrock.
\newblock Bi- and trilinear Schr\"odinger estimates in one space dimension with applications to cubic NLS and DNLS.
\newblock {\em  Int. Math. Res. Not. }    2005,  no. 41, 2525--2558.



\bibitem{GrunHerr}
 A.~Gr\"unrock and S.~Herr.
\newblock Low regularity local well-posedness of the derivative nonlinear Schr\"odinger equation with periodic initial data.
\newblock {\em  SIAM J. Math. Anal. } 39  (2008),  no. 6, 1890--1920.

\bibitem{Hormander}
L.~H\"ormander.
\newblock The analysis of linear partial differential operators. II. Differential operators with constant coefficients. 
\newblock {\em Grundlehren der Mathematischen Wissenschaften}, 257. Springer-Verlag, Berlin, 1983.

\bibitem{KPV}
 C.~E.~Kenig, G.~Ponce, and L.~Vega.
\newblock Quadratic forms for the $1$-D semilinear Schr\"odinger equation.
\newblock {\em Trans. Amer. Math. Soc. }348 (1996), no. 8, 3323--3353. 



\bibitem{Molinet}
L.~Molinet.
\newblock Global well-posedness in the energy space for the Benjamin-Ono equation on the circle.
\newblock {\em  Math. Ann.}, (2007), 337: 353--383.





\bibitem{Tsutsumi}
Y.~Tsutsumi.
\newblock  $L^{2}$-solutions for nonlinear Schr\"odinger equations ond nonlinear groups, 
\newblock {\em  Funk. Ekva.} 30 (1987), 115--125.

\bibitem{Tzvetkov2}
N.~Tzvetkov.
\newblock  Invariant measures for the defocusing NLS.\\
\newblock {\em  arXiv:math/0701287.  }



\end{thebibliography}
\end{document}